\newtheorem{theorem}{Theorem}
\newtheorem{lemma}{Lemma}
\newtheorem{corollary}{Corollary}
\newtheorem{conjecture}{Conjecture}
\theoremstyle{definition}
\newtheorem{remark}{Remark}
\newtheorem{claim}{Claim}
\def\mv#1{d^{(#1)}}
\def\lex{\mathrel{<_{\text{lex}}}}
\def\G{\mathcal{G}}
\def\B{\mathcal{B}}
\def\R{\mathcal{R}}
\def\M{\mathcal{M}}
\def\ZZ{\mathbb{Z}}
\def\cH{\mathcal{H}}
\title{On Remoteness Functions 
of Exact Slow $k$-NIM with $k+1$ Piles}
\author{
Vladimir Gurvich\thanks{
National Research University Higher School of Economics (HSE), Moscow, Russia; e-mail:
vgurvich@hse.ru and vladimir.gurvich@gmail.com}
\and
Denis Martynov\thanks{
National Research University Higher School of Economics (HSE), Moscow, Russia; e-mail:
taiwong2464@gmail.com}
\and
Vladislav Maximchuk\thanks{
National Research University Higher School of Economics (HSE), Moscow, Russia; e-mail:
vladislavmaximchuk3495@gmail.com}
\and
Michael Vyalyi\thanks
{National Research University Higher School of Economics, Moscow, Russia;
  Institute of Physics and Technology, Dolgorpudnyi, Russia; Federal Research Center ``Computer Science and Control'' of the Russian Academy of Sciences, Moscow, Russia;
 e-mail:
vyalyi@gmail.com}
}
\begin{document}

\maketitle 

{\bf Abstract}
Given integer $n$ and $k$ such that $0 < k \leq n$ 
and  $n$ piles of stones, 
two player alternate turns. 
By one move it is allowed 
to choose any  $k$  piles and remove  
exactly one stone from each.
The player who has to move but cannot is the loser. 
Cases  $k=1$  and $k = n$  are  trivial. 
For $k=2$  the game was solved for  $n \leq 6$. 
For $n \leq 4$  the Sprague-Grundy function was 
efficiently computed 
(for both the normal and mis\`ere versions). 
For $n = 5,6$  a polynomial algorithm 
computing P-positions was obtained. 
Here we consider the case $2 \leq k = n-1$ 
and compute Smith's remoteness function,  
whose even values define the P-positions. 
In fact, an optimal move 
is always defined by the following simple rule:
\begin{itemize}
\item{}
if all piles are odd, 
keep a largest one and reduce all other;
\item{} if there exist even piles, 
keep a smallest one of them 
and reduce all other. 
\end{itemize}

Such strategy is optimal for both players, moreover,  
it allows  to win as fast as possible 
from an N-position and 
to resist as long as possible from a P-position. 

\medskip 

\noindent{\bf Keywords:} 
Exact Slow NIM, Smith's Remoteness Function, 
Sprague-Grundy Function

\medskip 

{\bf AMS subjects:  91A05, 91A46, 91A68} 

\section{Introduction} 
\label{s0}
We assume that the reader is familiar 
with basic concepts of impartial game theory; 
see e.g., \cite{ANW07,BCG01-04} for an introduction. 

\subsection{Exact Slow NIM} 
\label{ss00}
Game Exact Slow NIM  
was introduced in \cite{GH15} as follows: 
Given two integers  $n$ and $k$  
such that  $0 < k \leq n$  and  
$n$  piles containing  $x_1, \dots, x_n$  stones.  
By one move it is allowed to reduce any  $k$  piles 
by exactly one stone each. 
Two players alternate turns. 
One who has to move but cannot is the loser.  
In \cite{GH15}, this game was denoted  NIM$^1_=(n,k)$.
Here we will simplify this notation to  NIM$(n,k)$.
We start with three simple properties of this game. 

\smallskip

Obviously, value    
$x_1 + \dots + x_n \; \mod k$  
is an invariant of game NIM$(n,k)$.  
Hence, NIM$(n,k)$  is split into  
$k$  independent subgames, 
between which there are no moves. 

\smallskip

A position $x = (x_1, \dots, x_n)$
will be called {\em even} or {\em odd}  
if all its coordinates are even or odd, respectively. 

\medskip 

Every even position of 
game NIM$(n,k)$  is a P-position, 
because each move of the first player 
can be repeated by the opponent. 
Respectively, every position  $x$  
with exactly  $k$  odd coordinates 
is an N-position, because there is a move from  $x$ 
to a P-position. 

\begin{remark} 
Note that both above claims 
can be obviously extended from game NIM$(n,k)$  
to the slow NIM to arbitrary hypergraph \cite{GH15}. 
Given a hypergraph  
$\cH \subseteq 2^{[n]} \setminus \{\emptyset\}$ 
on the ground set $[n] = \{1, \dots, n\}$,
and a nonnegative vector  $x = (x_i \mid i \in [n])$, 
the {\em hypergraph slow game NIM$_\cH$}  
is defined as follows: 
By one move, a player can choose any hyperedge 
$H \in \cH$ such that  $x_i > 0$  for every $i \in H$ 
and reduce by 1  each $x_i$  for  $i \in H$.  
Then, game NIM$(n,k)$  corresponds to 
the hypergraph slow NIM$_{\cH}$   
with   $\cH = \{H \mid |H| = k\}$. 
In any  hypergraph slow NIM$_{\cH}$,  every even $x$  is a P-position 
and, hence,  $x$  is an N-position when  
$x_i$  is odd if and only if  $i \in H$
for some hyperedge $H \in \cH$.
\end{remark} 

Game NIM$(n,k)$  is trivial if  $k = 1$  or  $k = n$. 
In the first case it ends after $x_1 + \dots + x_n$ moves  
and in the second one---after  $\min(x_1, \dots, x_n)$ 
moves. In both cases nothing depends on players' skills. 
All other cases are more complicated. 

The game was solved for  $k=2$  and  $n \leq 6$.
In \cite{GHHC20}, an explicit formula 
for the Sprague-Grundy 
(SG) function was found for  $n \leq 4$
(for both, the normal and mis\`ere versions of the game). 
This formula allows us to compute  the SG function in linear time. 
Then, in \cite{CGKPV21} the P-positions were found for  $n \leq 6$.  
For the subgame with even  $x_1 + \dots + x_n$ 
a simple formula for the P-positions was obtained. 
It allows for a given position, in linear time,  
to verify if it is a P-position and, if not, 
to find a move from it to a P-position.  
The subgame with odd  $x_1 + \dots + x_n$ is  more difficult. 
Still, a more sophisticated  formula for the P-positions was found and it gives a linear time algorithm for recognizing P-positions. 

\subsection{Case $n = k+1$}
\label{ss01}
In this paper we solve the game in case $n = k+1$. 
In every position an optimal move 
is always defined by the following simple rule: 
\begin{itemize}
\item[] if all piles are odd, 
keep a largest one and reduce all other;
\item[] if there exist even piles, 
keep a smallest one of them 
and reduce all other. 
\end{itemize}

We will call this strategy the {\em M-rule} and 
the corresponding moves by the {\em M-moves}. 
Obviously, in every position 
there exists a unique M-move, 
up to re-numeration of piles. 
We will show that M-rule solves the game, moreover, 
it allows to win as fast as possible in an N-position and 
to resist as long as possible in a P-position. 

\smallskip 

Given a position  $x = (x_1, \ldots, x_n)$, 
assume that both players follow the M-rule and  
denote by $\M(x)$  the number of moves 
from  $x$  to a terminal position. 
We will prove that  $\M = \R$, where 
$\R$  is the so-called remoteness function 
considered in the next subsection.

\smallskip  

Let us note that in some positions 
the M-move may be a unique winning move. 
For example, in positions 
(1,1,2)  and  (1,1,3)  of NIM(3,2)  
the M-move  reduces  1,1  to 0,0, 
and wins immediately, while moves 
to  (0,1,1)  and  (0,1,2)  are losing. 

\smallskip 

Let us note 
that there exists no M-move to an odd position. 
Yet, the latter may be a P- or N-position. 
For example, in NIM(3,2), position 
(1,1,1)  is an N-position, 
while  (3,3,3)  is a P-position, $\M(3,3,3) = 4$. 
The corresponding sequence of M-moves is: 
$(3,3,3) \to (2,2,3) \to (1,2,2) \to (0,1,2) \to (0,0,1).$ 

\subsection{Smith's Remoteness Function} 
\label{ss02}
In 1966 Smith \cite{Smi66} 
introduced a {\em remoteness} function $\R$ 
for impartial games by the following algorithm. 

Consider an impartial game 
given by a finite acyclic directed graph 
(digraph)  $G$.  
Set  $\R = 0$  for all terminal positions 
of  $G$  and  
$\R(x) = 1$  if and only if 
there is a move from $x$ to a terminal position.  
Delete all labeled positions from  $G$  and 
repeat the above procedure increasing  $\R$  by 2,  
that is, assign 2 and 3  instead of  0 and 1; etc. 

This algorithm was considered 
as early as in 1901 by Bouton \cite{Bou901},  
but only for a special graphs 
corresponding to the game of NIM. 
In 1944 this algorithm was extended to 
arbitrary digraphs by von Neumann and Morgenstern \cite{NM44}. 
In graph theory, the set of positions with even  $\R$  
they called the {\em kernel} of a digraph; 
in theory of impartial games, this set 
is referred to as the set of  P-positions. 

Function $\R$ has also the following, stronger, properties: 
Position $x$  is a P-position if and only if  $\R(x)$  is even. 
In this case the player making a move from  $x$  cannot win, yet,   
can resist at least $\R(x)$  moves, but not longer. 
Position  $x$  is an N-position if and only if  $\R(x)$  is odd.  
In this case the player making a move from  $x$ wins in at most 
$\R(x)$ moves, but not faster. 
In both cases the player making a move from   $x$  should  
reduce $\R$  by one, which is always possible
unless position  $x$  is terminal,  
in which case  $\R(x) = 0$.

\subsection{Sprague-Grundy and Smith's Theories}  
\label{ss03}
Given $n$ impartial games 
$\Gamma_1, \dots, \Gamma_n$, 
by one move a player chooses one of them 
and makes a move in it. 
The player who has to move but cannot is the looser. 
The obtained game  
$\Gamma = \Gamma_1 \oplus \dots \oplus \Gamma_n$  
is called the {\em disjunctive compound}. 
For example, NIM with $n$ piles 
is the disjunctive compound of $n$  one-pile NIMs. 
The SG function $\G$ of $\Gamma$ is uniquely determined 
by the SG functions of $n$ compound games by formula 
$\G(\Gamma) = \G(\Gamma_1) \oplus \dots \oplus \G(\Gamma_n)$ 
where $\oplus$  is the so-called NIM-sum. 
These results were obtained by Bouton 
\cite{Bou901}  for the special case of NIM and 
then extended to arbitrary impartial games 
by Sprague \cite{Spr36} and Grundy \cite{Gru39}. 

\medskip

Now suppose that   
by one move a player makes a move 
in each of the $n$ compound games, 
rather than in one of them. 
Again, the player who has to move but cannot is the looser.
The obtained game  
$\Gamma = \Gamma_1 \wedge \dots \wedge \Gamma_n$  
is called the {\em conjunctive compound}. 
Remoteness function $\R$ of $\Gamma$ is uniquely determined 
by the remoteness functions of $n$ compound games by formula 
$\R(\Gamma) = \min(\R(\Gamma_1), \dots, \R(\Gamma_n))$. 
This result was obtained in 1966 by Smith \cite{Smi66}. 

\subsection{On \boldmath $m$-critical positions of game NIM$(n,n-1)$}
\label{ss04}

The relation  $x \succeq y$  is defined standardly as follows. 
Order, monotone increasingly, the coordinates of 
$n$-vectors $x$ and $y$  and denote the obtained 
$n$-vectors by $x^*$ and $y^*$. 
We write $x \succeq y$  if 
$x^*_i \geq  y^*_i$  for all  $i \in \{1, \dots, n\}$ 
and  $x \succ y$  
if, in addition, $x^* \neq y^*$, 
or in other words, 
if at least one of these 
$n$ inequalities is strict. 

In the set of positions $x$ with fixed value $\M(x)$ 
we distinguish minimal with respect 
to the relation $\succeq$. 
Given a nonnegative integer  $m$,    a position $x = (x_1, \dots x_n)$  
of game NIM$(n,k)$  with $n = k+1$  is called  $m$-critical if  $\M(x) = m$  and  $x \succ y$ fails for any position $y$  with  $\M(y) = m$. 

By this definition, for any position $y$   
there exists a unique $m = m(y)$  
such that  $y \succeq x$  holds 
for some $m$-critical $x$  and   
$y \succeq x$  fails for 
any $(m+1)$-critical  $x$.

\smallskip 

We will characterize  $m$-critical positions 
of game  NIM$(k+1,k)$  % with  $n = k+1$  
as follows. 

\begin{theorem}\label{th:critical}
Position $x = (x_1, \dots, x_n)$  
of game NIM$(n,k)$  with $n = k+1$  
is $m$-critical if and only if 
one of the following two cases holds: 

\bigskip 

\textup{(A)}  $\;\; x_1 + \dots + x_n = km$; 

\smallskip 

$\max(x_1, \dots, x_n) \leq m$;   

\smallskip 

if $m$ is even then  $x$  is even 
(that is, all $n$ coordinates of $x$  are even);  

\smallskip 

if $m$ is odd then exactly one of $n$ coordinates 
of  $x$  is even. 

\bigskip 

\textup{(B)} $\;\; x_1 + \dots + x_n = km + k - 1$;

\smallskip 

$\max(x_1, \dots, x_n) < m$;   

\smallskip 

$m$ is even, $x$ is odd. 
% all coordinates of  $x$ are odd. 
\end{theorem}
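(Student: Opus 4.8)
The plan is to prove the characterization by induction on $m$, driven by the recursion that the M-rule itself induces. Writing $\mu(x)$ for the position reached from $x$ by the (essentially unique) M-move, the definition of $\M$ gives $\M(x)=1+\M(\mu(x))$ for every non-terminal $x$, so everything reduces to understanding how $\mu$ acts on the sorted vector together with its parity pattern. I would first record the transition rules. If $x$ is of type (A) with $m$ even, all coordinates are even, so $\mu$ keeps the smallest coordinate and decrements the other $n-1$; the result has exactly one even coordinate, sum $k(m-1)$, and maximum $\le m-1$ (the kept coordinate is even and strictly below $m$, else all coordinates equal $m$ and $\sum x_i=(k+1)m\neq km$), i.e.\ it is of type (A) with parameter $m-1$. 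If $x$ is of type (A) with $m$ odd, then $\mu$ keeps the unique even coordinate and decrements the $n-1$ odd ones, producing an all-even type-(A) position with parameter $m-1$. Hence $\M$ equals $m$ along the entire type-(A) chain, which terminates at $\mathbf 0$, and downward induction gives $\M(x)=m$ for every type-(A) position.

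The harder half of the ``$\M=m$'' direction is case (B). Here $x$ is all-odd, so $\mu$ keeps the largest coordinate and decrements the rest; the image has exactly one odd coordinate, $n-1$ even coordinates, and sum $km-1=k(m-1)+(k-1)$, so it is \emph{not} of type (A) or (B) and the induction does not close inside the critical family. I would therefore isolate an auxiliary family of ``one odd, the rest even'' positions and prove, by a secondary induction tracking the sum modulo $k$ and the maximum, that every member with sum $kt+(k-1)$ and suitably bounded maximum has $\M=t$; applied with $t=m-1$ this yields $\M(x)=m$ for type (B). Identifying the correct closure conditions for this auxiliary family is the step I expect to be the main obstacle: the bound on the maximum degrades along $\mu$, as already the run $(2,2,3)\to(1,2,2)\to(0,1,2)$ shows, and it is precisely here that the hypotheses ``$m$ even'' and ``$\max x_i<m$'' of case (B) must be used.

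For the converse I would first establish that, for each fixed $m$, the positions of type (A) or (B) form an antichain for $\succeq$. Two type-(A) positions share the sum $km$ and two type-(B) positions share the sum $km+k-1$, so in either case a domination forces equality. For even $m$ one must also rule out domination between the all-even type-(A) positions and the all-odd type-(B) positions; since $km<km+k-1$, only a type-(B) position could dominate a type-(A) one, and opposite parities would force a strict inequality in every one of the $n=k+1$ coordinates, giving $\sum\ge km+n=km+k+1>km+k-1$, a contradiction. Thus $\M$ takes value $m$ on these positions and on no position they strictly dominate, so each of them is minimal among $\{\,y:\M(y)=m\,\}$.

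It remains to prove completeness: every $x$ with $\M(x)=m$ satisfies $x\succeq z$ for some type-(A)/(B) position $z$ with parameter $m$, whence the minimal such $x$ must itself be one of the listed positions. For this I would use monotonicity of $\M$ with respect to $\succeq$ (if $x\succeq y$ then $\M(x)\ge\M(y)$), proved by coupling the two M-plays, and then argue by descent: for a minimal $x$, decreasing any positive coordinate by one strictly lowers $\M$, and reading this constraint against the sum/maximum/parity bookkeeping of the transition rules pins $\sum x_i$ to $km$ or $km+k-1$, forces $\max x_i\le m$ (respectively $<m$), and forces the stated parity pattern, leaving exactly cases (A) and (B). I expect the two genuinely load-bearing lemmas to be the auxiliary-family computation behind case (B) and this monotonicity-plus-minimality analysis; the remaining steps are routine parity and sum bookkeeping.
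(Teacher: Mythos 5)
Your treatment of case (A) is sound (it is essentially the paper's claim that an M-move from a basic position yields a basic position with parameter $m-1$), and your antichain computation for the converse is also correct. But the two steps you yourself flag as load-bearing are exactly where the argument breaks, and they break for the same reason. For case (B), the auxiliary family you describe does not close up under M-moves. Already the parity pattern is wrong for $k\geq 3$: an M-move from a position with one odd and $k$ even coordinates turns the $k-1$ decremented even coordinates odd, producing a position with $k-1$ odd and $2$ even coordinates, so the chain alternates between two patterns rather than staying in ``one odd, rest even''. Worse, the maximum bound genuinely degrades and recovers along the chain: in NIM$(4,3)$ the type-(B) position $(5,5,5,5)$ (so $m=6$) generates the M-chain $(5,5,5,5)\to(4,4,4,5)\to(3,3,4,4)\to(2,2,3,4)\to(1,2,2,3)\to(0,1,2,2)\to(0,0,1,1)$, where $(2,2,3,4)$ has sum $3\cdot 3+2$ but maximum $4>3$; and $(0,1,100)$ versus $(0,1,2)$ in NIM$(3,2)$ shows that what separates positions on which your sum formula computes $\M$ from those on which it does not is not any bound on the maximum but whether the position dominates a suitable type-(A) position. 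That is precisely the paper's device: it introduces $\B(x)=\max\{b(z)\colon z\preceq x,\ z\ \text{basic, i.e.\ of type (A)}\}$ and proves $\M(x)=\B(x)$ for non-all-odd positions and $\M(x)=\B(x)+1$ for the exceptional (type-(B)) ones; the substance of the proof is the lemma that no move joins two $B$-even positions and the claim that an M-move from a $B$-odd non-exceptional position decreases $\B$ by exactly one. Nothing in your proposal substitutes for this.

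The second gap is that your proof of monotonicity of $\M$ under $\succeq$ ``by coupling the two M-plays'' is false as stated: the M-move does not preserve domination. In NIM$(3,2)$ take $x=(2,3,3)\succ y=(1,3,3)$; the M-rule sends $x$ to $(2,2,2)$ (keep the smallest even pile) and $y$ to $(0,2,3)$ (all piles odd, keep a largest), and $(2,2,2)\not\succeq(0,2,3)$. So the inductive coupling cannot be set up, and with it your descent argument for completeness --- which was in any case only sketched --- collapses. Monotonicity of $\M$ is true, but in the paper it costs nothing: $\B$ is monotone under $\succeq$ by construction, being a maximum over dominated basic positions. It is worth internalizing that this single potential function repairs both of your gaps simultaneously; that is strong evidence that domination of basic positions, rather than chain-following plus sum/maximum/parity bookkeeping, is the right organizing idea for this theorem.
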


\medskip 

The proof will be given in Section~\ref{sec:proofs12}. 

\medskip 

Note that case (B)  provides a large family 
of odd P-positions. 
Note also that there are odd P-positions beyond 
this family too. 
For example, $\M(3,5,5) = 6$ in NIM$(3,2)$. 
The corresponding sequence of  M-moves is: 
$(3,5,5) \to (2,4,5) \to (2,3,4) \to (2,2,3) 
\to (1,2,2) \to (0,1,2) \to (0,0,1).$

\medskip 

Recall that, by definition, 
an M-move cannot lead to an odd position.  

\bigskip 

The following statement easily result   
from the above theorem.

\begin{corollary}
\label{cor1}
(i) Let  $x$ be an $m$-critical position with an even $m$. 
Then, $x$ is either odd \textup(Case \textup{(B)}\textup) or even \textup(Case \textup{(A)}\textup).    
In the latter case, at least one of even coordinates of  $x$ 
is strictly less than  $m$. 
A move from  $x$  leads 
to an odd $(m-1)$-critical position 
provided this move reduces 
all (maximal) coordinates of  $x$ 
that are equal to  $m$. 
In particular, an M-move has such property. 

\medskip

(ii) For an $m$-critical position $x$ with an odd $m$   
there exists exactly one move 
leading to an even $(m-1)$-critical position. 
This move keeps the (unique) even coordinate 
and reduces all other; 
furthermore, this move is a unique M-move. 
\end{corollary}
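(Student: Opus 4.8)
The plan is to derive Corollary~\ref{cor1} directly from the characterization of $m$-critical positions in Theorem~\ref{th:critical}, treating the two parts separately according to the parity of $m$. Throughout I write $n=k+1$, and recall that an M-move reduces all coordinates except one and therefore never produces an odd position, and that the sum $x_1+\dots+x_n \bmod k$ is invariant (so any legal move changes the sum by exactly $k$).

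\medskip

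For part (i), suppose $x$ is $m$-critical with $m$ even. By Theorem~\ref{th:critical} only Case~(A) or Case~(B) can apply. In Case~(B) the position is odd, and in Case~(A) the parity clause for even $m$ forces $x$ to be even; this gives the dichotomy. In the even subcase, I would observe that the sum condition $\sum x_i = km$ together with $\max x_i \le m$ forces at least one coordinate to be strictly below $m$: if every coordinate equaled $m$ the sum would be $nm = (k+1)m > km$, a contradiction. Next I must show that a move reducing every coordinate equal to $m$ lands on an $(m-1)$-critical position. Such a move subtracts $1$ from $k$ of the $n$ coordinates (the $k$ largest), leaving the unique smallest coordinate fixed; the new sum is $km - k = k(m-1)$, the new maximum is at most $m-1$, and exactly one coordinate (the untouched smallest even one) stays even while the $k$ reduced coordinates become odd. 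This is precisely Case~(A) for the odd value $m-1$, so the target is odd and $(m-1)$-critical. I would then note that the M-rule in the even case ``keep a smallest even coordinate, reduce all others'' executes exactly this move, verifying the final sentence.

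\medskip

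For part (ii), suppose $x$ is $m$-critical with $m$ odd. Only Case~(A) can apply, since Case~(B) requires $m$ even, so exactly one coordinate of $x$ is even and the other $k$ are odd, with $\sum x_i = km$ and $\max x_i \le m$. The move that keeps the unique even coordinate and reduces the other $k$ produces a sum $km - k = k(m-1)$ with $m-1$ even, makes all $k$ reduced (odd) coordinates even, and leaves the already-even coordinate even, so the result is an even position satisfying Case~(A) for $m-1$; hence it is $(m-1)$-critical. To prove uniqueness I would argue that any legal move must reduce exactly $k = n-1$ coordinates, i.e. leave exactly one coordinate fixed; the parity of the resulting position is even only if the fixed coordinate is the (single) even one and all reduced coordinates were odd. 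Since $x$ has exactly one even coordinate, the fixed coordinate is forced to be that even one, so the move is unique, and it coincides with the M-move by definition of the M-rule.

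\medskip

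The main obstacle I anticipate is the uniqueness claim in part (ii): I need to rule out not merely other even targets but any move to an even \emph{$(m-1)$-critical} position. The parity argument pins down which coordinate is fixed, but I should also confirm that the resulting position genuinely satisfies all clauses of Case~(A) (the max bound in particular, which follows since reducing coordinates cannot increase the maximum and $\max x_i \le m$ already holds). A secondary subtlety is handling coordinates equal to $m$ in part (i): I must check that such a coordinate, being forced to be even when $m$ is even, is indeed among those reduced by the M-move, so that the image has strictly smaller maximum; this relies on the observation that every coordinate equal to the even value $m$ is even and hence not the unique retained smallest-even coordinate unless it is the only even coordinate, a boundary case that the sum argument above already excludes.
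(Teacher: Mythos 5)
Your proposal follows the same route as the paper: both derive the corollary by directly checking the clauses of Theorem~\ref{th:critical} (sum, maximum bound, parity pattern) for the position reached by the move in question, and your two key observations --- that not all $k+1$ coordinates can equal $m$ (sum would be $(k+1)m \neq km$), and that in part (ii) the parity of the kept coordinate forces uniqueness of the move --- are exactly the ones the paper records before declaring the rest ``immediate from the definitions.'' So in substance and structure this is the paper's proof, written out in full detail.

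Two justifications, however, need repair. First, in part (ii) you support the maximum bound for the target position by saying that ``reducing coordinates cannot increase the maximum and $\max x_i \le m$ already holds.'' That argument only yields $\max \le m$ for the new position, whereas Case~(A) at level $m-1$ requires $\max \le m-1$, so as stated this step does not close. The correct argument --- and it is precisely the observation the paper's proof of (ii) leads with --- is that the kept coordinate is the unique even one, hence strictly less than $m$ because $m$ is odd, i.e.\ at most $m-1$; and every reduced coordinate drops from at most $m$ to at most $m-1$. Second, a smaller slip in part (i): you describe the admissible move as ``leaving the unique smallest coordinate fixed.'' The corollary's hypothesis only requires that the kept coordinate differ from $m$; it need not be the smallest one, and the smallest coordinate need not be unique. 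Your sum/maximum/parity verification goes through verbatim for any kept coordinate strictly below $m$, so nothing is lost, but the claim as phrased misstates which moves are covered (the statement covers all moves that reduce every coordinate equal to $m$, of which the M-move is just one instance).
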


\proof 
(i) 
It is enough to note that 
all $n=k+1$  coordinates 
cannot be equal to  $m$, because 
in this case their sum would be  $(k+1)m$ 
rather than  $km$, 
as required by Theorem \ref{th:critical}.  
All other statements are immediate 
from the definitions and Theorem \ref{th:critical}. 

\medskip

(ii)
If the largest coordinate $x_i$ of $x$  is even, 
then  $x_i$  is unique and  $x_i < m$, 
since  $m$  is odd. 
Hence, by Theorem \ref{th:critical}, 
the M-move is unique and leads to 
an $(m-1)$-critical position. 
All other statements are immediate 
from the definitions and Theorem \ref{th:critical}. 
\qed

\bigskip 

Due to Theorem~\ref{th:main} below, a suitable generalization 
of $m$-critical positions on arbitrary game uses 
the remoteness function  $\R(x)$ instead of $\M(x)$. 
Characterizing  $m$-critical positions of game  NIM$(n,k)$ 
% in general, 
remains an open problem. 

\begin{conjecture} 
For any integer $k,n$ and $m$ 
such that  $0 < k < n$  and $0 \leq m$, inequalities 

\medskip 

$km \leq (x_1 + \dots + x_n) < k(m+1)\;$  
and 
$\; \max(x_1, \dots, x_n) \leq m$  

\medskip 
\noindent 
hold for any $m$-critical position   
$x =(x_1, \dots, x_n)$  of game NIM$(n,k)$. 
\end{conjecture}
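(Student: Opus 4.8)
The plan is to separate the three required inequalities and dispose of the easy one first. The lower bound $km \le x_1 + \dots + x_n$ in fact holds for \emph{every} position, critical or not. Indeed, each move of NIM$(n,k)$ removes exactly $k$ stones and the total can never become negative, so every play started at $x$ terminates after at most $\lfloor (x_1+\dots+x_n)/k\rfloor$ moves. Since $\R(x)=m$ is the length of the game under optimal minimax play (the winner shortening, the loser prolonging), it cannot exceed the length of the longest play; hence $m \le \lfloor(x_1+\dots+x_n)/k\rfloor \le (x_1+\dots+x_n)/k$, i.e. $km \le x_1+\dots+x_n$. Throughout I read the definition of an $m$-critical position with $\R$ in place of $\M$ (as indicated before the conjecture), since for $n>k+1$ neither $\M$ nor the M-rule is available. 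As a consistency check, for the solved case $n=k+1$ both families of Theorem~\ref{th:critical} already obey all three inequalities, so the conjecture is a theorem there and the general statement is its natural extension.

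For the upper bound on the largest pile, $\max(x_1,\dots,x_n)\le m$, I would use a capping argument that exploits $\succeq$-minimality. A pile exceeding $m=\R(x)$ can never be emptied inside a play of length at most $m$, so it should behave as an inexhaustible reservoir: I expect that reducing any coordinate $x_j>m$ by one leaves $\R$ unchanged. If this invariance holds, the reduced position $x'$ satisfies $\R(x')=m$ and $x'\prec x$ (decreasing one coordinate makes the increasingly sorted vector coordinatewise no larger and strictly smaller somewhere), contradicting the $\succeq$-minimality of the critical $x$; hence no coordinate of $x$ can exceed $m$. The delicate point is the boundary value: a pile of size exactly $m$ can be driven to $0$ precisely at the terminal move, so the verification that a reduction does not disturb the minimax value must treat separately those plays that touch pile $j$ at every single step.

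The remaining inequality $x_1+\dots+x_n<k(m+1)$ is the hard one and is exactly the source of difficulty. Rewritten, it is equivalent to $\lfloor(x_1+\dots+x_n)/k\rfloor \le m=\R(x)$; combined with the lower bound of the first paragraph it says that for a critical position $\R(x)=\lfloor(x_1+\dots+x_n)/k\rfloor$ \emph{exactly}, i.e. optimal play is forced to drive the game all the way down to a terminal position carrying only the residue $(x_1+\dots+x_n)\bmod k<k$ of stones, never stranding a large pile and finishing early. Proving this amounts to showing that from a $\succeq$-minimal position the winner cannot terminate ahead of schedule. I would attempt induction on $m$: the base $m=0$ is clean, since the unique $\succeq$-minimal terminal position is $(0,\dots,0)$, for which all three inequalities are trivial, and the step would propagate the bounds from $(m-1)$-critical to $m$-critical positions along optimal moves, using the invariance of $(x_1+\dots+x_n)\bmod k$ and the recursion defining $\R$.

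The main obstacle is precisely this inductive step. For $n>k+1$ no explicit description of $\R$ (analogous to Theorem~\ref{th:critical}) is known, and $\R$ is not monotone with respect to $\succeq$, so one cannot easily say which moves are optimal nor how criticality descends by one level. Without controlling the descent of the $\succeq$-minimal structure under an optimal move, the passage from $(m-1)$-critical to $m$-critical positions cannot presently be closed, and this is exactly why the statement is offered as a conjecture rather than proved as a theorem.
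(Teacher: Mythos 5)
First, a point of calibration: the statement you were given is labelled a \emph{conjecture} in the paper, and the authors offer no proof of it; the only settled case is $n=k+1$, where it follows from Theorem~\ref{th:critical} together with Theorem~\ref{th:main}, exactly as your consistency check observes. So there is no proof in the paper to compare yours against, and your own text concedes that your attempt is not a proof either: the inequality $x_1+\dots+x_n<k(m+1)$ is left entirely open, with an inductive plan whose step you explicitly cannot close (no description of $\R$ or of optimal moves is available for $n>k+1$). That is the decisive gap, and it is the same obstruction that led the authors to state this as a conjecture rather than a theorem.

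Second, on the parts you do argue. The lower bound $km\le x_1+\dots+x_n$ is correct and complete: every move removes exactly $k$ stones, so any play from $x$ --- in particular the play of length $\R(x)=m$ realized under the remoteness semantics --- has length at most $\lfloor(x_1+\dots+x_n)/k\rfloor$. The bound $\max_i x_i\le m$, however, rests on an invariance claim you only assert (``I expect that reducing any coordinate $x_j>m$ by one leaves $\R$ unchanged''), and the boundary difficulty you flag is real. The natural way to prove invariance is a capping lemma, provable by induction on $T$ from the recurrence~\eqref{e-SM-acyclic-def}: if two positions agree after capping every coordinate at $T$, then they admit the same moves, their corresponding successors agree after capping at $T-1$, and hence their remoteness values coincide whenever either value is strictly below $T$ (and otherwise both are at least $T$). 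Applied to $x$ and $x'=x-e^{(j)}$ this needs the capped coordinates to agree at level $T=m+1$, i.e.\ $x_j-1\ge m+1$; so it settles only the case $x_j\ge m+2$. When $x_j=m+1$, capping at $T=m$ yields merely $\R(x')\ge m$, and to exclude $\R(x')>m$ one needs some monotonicity of $\R$ with respect to $\preceq$. For $n=k+1$ such monotonicity does hold, because by Theorem~\ref{th:main} the function $\R$ is expressed through $\B$ of~\eqref{eq:G-def}, which is a maximum over dominated basic positions and hence monotone; for general $n,k$ it is not known --- in essence another face of the same open problem. So even your second inequality is not closed as written, although the capping idea is sound and very likely the right tool.
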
 

It seems difficult to characterize 
the $m$-critical positions for 
arbitrary $k,n$ and $m$. 
Consider, for example,  
$k = 3, n = 5$ and $m = 8$. 
Computations show that 
$(1,3,7,7,7), (1,5,5,7,7), (3,3,5,7,7)$, and  $(5,5,5,5,5)$ 
are $m$-critical positions of game  NIM$(5,3)$, 
while  $(3,5,5,5,7)$  is not; instead  $(3,5,5,6,7)$  is. 

\subsection{Main Results} 

Our main results are 
the following two properties of game NIM$(n,n-1)$. 

\begin{theorem}
\label{th:main}
Equalities  $m(x) = \M(x) = \R(x)$ 
hold for any position  $x$. 
\end{theorem}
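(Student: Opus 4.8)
The plan is to establish the chain of equalities $m(x)=\M(x)=\R(x)$ by proving two separate things: first that $\M(x)=\R(x)$, and then that $m(x)=\M(x)$ (or equivalently $m(x)=\R(x)$). The conceptual heart is the claim that the M-rule is \emph{optimal} in Smith's strong sense, i.e.\ it simultaneously realizes the fastest win from an N-position and the longest resistance from a P-position. Since $\R$ is, by definition, the unique function with these optimality properties on a finite acyclic game, it suffices to show that the function $\M$ induced by the M-rule obeys the defining recurrence of $\R$.

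First I would set up the recurrence characterization of $\R$: a position $x$ is terminal iff $\R(x)=0$; for a non-terminal $x$ with all successors $y$, one has
\[
\R(x)=
\begin{cases}
1+\min_{y}\R(y), & \text{if some }\R(y)\text{ is even},\\
1+\max_{y}\R(y), & \text{if all }\R(y)\text{ are odd},
\end{cases}
\]
which encodes ``hurry to win through an even (P) successor, stall through odd (N) successors.'' The goal is to prove by induction on $\M(x)$ (equivalently on the total number of stones, which strictly decreases under every move) that $\M$ satisfies exactly this recurrence and that the M-move attains the relevant extremum. Concretely, I would prove two lemmas: (i) if $x$ is even or, more generally, if $\M(x)$ comes out even, then every M-successor $y$ has $\M(y)=\M(x)-1$ odd and \emph{no} successor of $x$ has smaller even value, so that $x$ behaves like a P-position from which one can resist exactly $\M(x)$ moves; (ii) if $\M(x)$ is odd, then the M-move leads to a successor with $\M(y)=\M(x)-1$ even, and this is the fastest route, no successor having a smaller value. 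The structural information needed for these case analyses—parity of coordinates, the sum being $km$ or $km+k-1$, the bound $\max x_i\le m$—is exactly what Theorem~\ref{th:critical} and Corollary~\ref{cor1} provide.

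The bridge to $m(x)$ is then comparatively formal. By the definition of $m(x)$ via $m$-critical positions and the monotonicity relation $\succeq$, every position $y$ dominates some $m(y)$-critical position and dominates no $(m(y)+1)$-critical one. I would show that $\M$ is monotone nondecreasing along $\succeq$ (adding stones cannot decrease the number of M-moves to termination), and that each $m$-critical $x$ satisfies $\M(x)=m$; combined with the fact, from Theorem~\ref{th:critical}, that the $m$-critical positions are precisely the $\succeq$-minimal positions with $\M$-value $m$, this forces $m(x)=\M(x)$ for all $x$. The monotonicity of $\M$ under $\succeq$ should follow from the M-rule description by a strategy-mimicking or interleaving argument.

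The main obstacle I expect is the verification of the extremal (min/max) conditions in the $\R$-recurrence for \emph{arbitrary} successors $y$ of $x$, not just the M-successor. Showing that the M-move is optimal is really showing that \emph{no other move does better}: when $\M(x)$ is odd one must rule out any successor with $\M(y)<\M(x)-1$ or with $\M(y)$ even and smaller, and when $\M(x)$ is even one must rule out any successor leading to a strictly faster-losing even position. This requires controlling $\M$ on all neighbors, which in turn needs a clean closed description of $\M$ as a function of the sorted coordinates (presumably reconstructable from the case (A)/(B) data of Theorem~\ref{th:critical}); nailing down that formula and checking it is preserved under every admissible move—especially handling the delicate parity transitions where an even coordinate is created or all coordinates become odd—is where the bulk of the careful work will lie.
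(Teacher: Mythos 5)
Your skeleton---verify Smith's recurrence \raf{e-SM-acyclic-def} for $\M$ by induction, with the M-move attaining the relevant extremum, and then bridge to $m(x)$---is the same as the paper's, but two load-bearing steps are missing or mis-stated, and they are exactly where the difficulty lives. First, your lemma (i) for even $\M(x)$ is too weak: it is not enough to rule out successors with \emph{smaller} even value. By the recurrence, if \emph{any} successor has even remoteness (smaller, equal, or larger), then $\R(x)$ is odd and the equality $\R(x)=\M(x)$ fails; and if any successor has odd value exceeding $\M(x)-1$, the max-branch overshoots. So one must prove that every move from an even-valued position strictly decreases the value and lands on an odd-valued position. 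This is precisely the paper's Lemma~\ref{GevenGeven} (no moves between $B$-even positions) together with Claims~\ref{Bnonincrease} and~\ref{ToGeven}; Lemma~\ref{GevenGeven} is the longest and most delicate case analysis in the paper, and your proposal neither contains it nor identifies the need for it---indeed you explicitly defer ``the bulk of the careful work.'' Moreover, Theorem~\ref{th:critical} alone cannot supply this control: it describes only the $\succeq$-minimal positions of each value, and to bound $\M$ on \emph{all} successors you need to know that $\M(y)$ equals the largest $m$ such that $y$ dominates an $m$-critical position---which is the identity $m=\M$ you are in the middle of proving. The paper escapes this circle by introducing the auxiliary function $\B(x)=\max\{b(z): z\preceq x,\ z\ \text{basic}\}$, proving all move lemmas for $\B$ directly, and only then identifying $\B$ with $\M$ (Lemma~\ref{M=G}) and deducing Theorem~\ref{th:critical}.

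Second, your bridge $m(x)=\M(x)$ rests on monotonicity of $\M$ under $\succeq$, to be shown ``by a strategy-mimicking or interleaving argument.'' That argument fails as stated, because M-trajectories do not preserve domination: in NIM$(3,2)$ we have $(1,3,3)\preceq(2,3,3)$, but the M-moves lead to $(0,2,3)$ and $(2,2,2)$ respectively, and $(0,2,3)\not\preceq(2,2,2)$ since $3>2$. (Monotonicity of $\M$ is true, but in the paper it is never proved directly; it falls out of Lemma~\ref{M=G}, since $\B$ is monotone by construction as a maximum over dominated basic positions.) Finally, the exceptional positions of case (B) need separate treatment throughout the induction: they are even-valued P-positions supported by no basic position with their own parameter, and the facts governing moves into and out of them (Claims~\ref{exceptional0}--\ref{exceptional3}) do not follow from your dichotomy (i)/(ii), which tacitly treats all even-valued positions as case-(A) positions. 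In short, the plan has the right shape but omits every analogue of Claims~\ref{Bnonincrease}, \ref{DeltaGodd}, \ref{ToGeven}, \ref{FromGeven}, \ref{FromGodd} and Lemma~\ref{GevenGeven}, and the one new argument it does sketch (trajectory mimicking) does not work.
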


\begin{theorem}\label{th:complexity}
%Vyalyi: 
Function $\R$ can be computed in polynomial time, 
even if $n$  is a part of the input and integers are presented in binary.
\end{theorem}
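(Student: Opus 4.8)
The plan is to prove Theorem~\ref{th:complexity}, the polynomial-time computability of $\R$. Let me think carefully about what the theorem requires and how the structural results already established can be leveraged.

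We need to compute $\R(x)$ for a given position $x = (x_1,\dots,x_n)$ of NIM$(n,n-1)$, where $n$ is part of the input and the $x_i$ are given in binary. So the running time must be polynomial in $n$ and in $\sum \log x_i$ (i.e., the bit-length). A naive approach—following M-moves to a terminal position—takes $\M(x) = \R(x)$ moves, which can be exponentially large (as large as roughly $\sum x_i / (n-1)$), so that's no good. We need a closed-form or quasi-closed-form formula, or a way to compute $m(x)$ directly.

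By Theorem~\ref{th:main}, $\R(x) = m(x)$, and $m(x)$ is defined via the $m$-critical positions characterized in Theorem~\ref{th:critical}. So the real task is: given $x$, compute $m(x)$. Recall $m(x)$ is the unique $m$ such that $x \succeq$ some $m$-critical position but $x \not\succeq$ any $(m+1)$-critical position. So I'd want to extract from Theorem~\ref{th:critical} an explicit formula for $m(x)$ in terms of $S = \sum x_i$, $\max x_i$, and the parities of the coordinates.

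Here's the plan. Let $k = n-1$, $S = x_1 + \dots + x_n$, and $M = \max(x_1,\dots,x_n)$. The Conjecture (which for our case $n=k+1$ is a theorem via Theorem~\ref{th:critical}) suggests $km \le S < k(m+1)$ and $M \le m$. This already pins $m$ down to within $O(1)$ candidates: roughly $m \approx \max(\lceil S/k \rceil, M)$.

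First I would establish, from Theorem~\ref{th:critical}, that $m(x)$ equals (up to a small correction term) $\max\!\big(\lceil S/k\rceil,\ M\big)$, then determine the exact value by checking the parity/critical conditions. Concretely, the plan is:

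\begin{itemize}
\item[] Compute $S$, $M$, and the number of odd coordinates of $x$; all in time polynomial in the input size.
\item[] Use the dominance order: $x \succeq y$ for an $m$-critical $y$ is feasible iff one can ``pack'' the required coordinate-sum $km$ (case A) or $km+k-1$ (case B) under $x$ subject to the max-constraint and parity constraints. I expect this feasibility to reduce to the two arithmetic inequalities $km \le S$ and $M \le m$ (or their case-(B) analogues), because dominance by a vector with prescribed sum and bounded maximum is exactly an integer-flow / majorization condition.
\item[] Conclude that $m(x)$ is the largest $m$ for which either case (A) or case (B) admits a dominated critical position, and read off a closed formula.
\end{itemize}

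I expect the main obstacle to be the parity bookkeeping. Theorem~\ref{th:critical} imposes delicate parity conditions—\emph{all} coordinates even when $m$ is even in case (A), \emph{exactly one} even when $m$ is odd, and $x$ fully odd in case (B)—and these interact nontrivially with the dominance relation $\succeq$, which ignores parities (it compares sorted magnitudes). So the crux is to show that whenever the arithmetic inequalities $km \le S \le k(m+1)-1$ and $M \le m$ hold, one can actually \emph{construct} an $m$-critical $y \preceq x$ with the correct parities, and conversely that violating them forces $x \not\succeq y$ for every critical $y$. I would prove this by an explicit greedy construction: sort $x$, then distribute the ``deficit'' $S - km$ (or the appropriate case-(B) quantity) among coordinates while respecting the cap $m$ and adjusting parities by $\pm 1$ where a single even/odd coordinate is demanded. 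Bounding how much a parity adjustment can shift the sum (by at most $O(n)$, and in the relevant cases by a controlled amount) is what ties the parity conditions back to the clean arithmetic bounds. Once the formula $m(x) = \R(x)$ is expressed in terms of $S$, $M$, and parity counts, computability in time polynomial in $n$ and the bit-lengths is immediate, since all these quantities are computed by elementary arithmetic on the binary inputs.
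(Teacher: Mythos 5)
Your reduction of the problem is correct: by Theorems~\ref{th:main} and~\ref{th:critical} it suffices to compute $m(x)$, and you rightly note that simulating M-moves is exponentially slow. The gap is in how you propose to compute $m(x)$. Your central claim --- that the existence of an $m$-critical $y \preceq x$ reduces to the two inequalities $km \le S$ and $M \le m$, so that $m(x) \approx \max\bigl(\lceil S/k\rceil, M\bigr)$ up to small parity corrections --- is false. Dominance $x \succeq y$ with $\sum_i y_i = km$ and $\max_i y_i \le m$ is a majorization condition on \emph{truncated} sums: it requires $km \le \sum_i \min(x_i, m)$, which depends on the entire sorted profile of $x$, not just on $S$ and $M$ (and the condition $M \le m$ on the maximum of $x$ is not relevant to dominance at all; in Theorem~\ref{th:critical} it constrains the critical position, not the positions dominating it). Concretely, in NIM$(3,2)$ take $x = (1,1,2N)$: here $S = 2N+2$ and your formula gives a value of order $N$, yet $\R(x) = 1$ for every $N$ (the move to the terminal position $(0,0,2N)$ wins immediately); similarly $x=(0,0,2N)$ has $\R(x)=0$. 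So the discrepancy is unbounded and cannot be absorbed by any $O(1)$ or $O(n)$ parity correction; $m(x)$ is simply not a function of $S$, $M$, and parity counts.

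Your packing/greedy instinct is on the right track, but it must be carried out with truncated sums, and the parity bookkeeping is exactly where the real work lies. The paper's proof handles this as follows: for exceptional (case (B)) positions $m(x)$ is read off directly; for the rest it computes $\B(x) = \max\{\,b(z) : z \preceq x,\ z \text{ basic}\,\}$. The key structural fact (Lemma~\ref{maxlex}) is that for even $b$ the reverse-lexicographically maximal basic $z \preceq x$ has a threshold form: there is an index $t$ with $z_i = b$ for $i \ge t$, $z_i$ equal to the even integer among $\{x_i-1, x_i\}$ for $1 < i < t$, and $z_1$ absorbing the slack. This reduces the even-$b$ case to $k+1$ subproblems, each a pair of linear inequalities in one unknown $q$ (this is where the truncated-sum structure lives), giving $E(x)$; the odd case is then settled not by a separate construction but by the M-move identity relating $\B(x)$ to $E(x)$ and $E(x')$ for the M-move $x \to x'$. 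If you want to salvage your approach, you would need to replace $km \le S$, $M \le m$ by the condition $km \le \sum_i \min\bigl(2\lfloor x_i/2\rfloor, m\bigr)$ for even $m$ (with a separate device, such as the paper's M-move trick, for odd $m$), and then argue feasibility is monotone in $m$ so that binary search applies.
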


Hence, P-positions can be recognized in polynomial time.

\subsection{On P-positions of game \boldmath NIM$(n,n-1)$} 
\label{ss05}
Recall that P-positions of an impartial game 
can be characterized in two ways, in other words,  
the following three statements are equivalent: 
$x$ is a P-position, $\G(x) = 0$, and $\R(x)$ is even. 
Hence, $x$  is a P-position of NIM$(n,n-1)$
if and only if the number $m(x) = \M(x) = \R(x)$  is even. 
Using the above theorems, we will obtain a 
polynomial algorithm verifying  
if  $x$  is a  P-position and if not 
computing a move from  $x$  to a P-position.   

Interestingly, 
we do not need this algorithm to play NIM$(n,n-1)$. 
It is much simpler just to follow the M-rule 
(see subsection \ref{ss00})---a~peculiar situation in impartial game theory, 
which somewhat downgrade the role of P-positions. 

\smallskip

For  NIM$(4,3)$ some pretty complicated formulas 
for P-positions were confirmed by computations; see Appendix. 
Yet, we  were not able to prove these formulas. 
Furthermore, they differ in three cases:
$x_1 + x_2 + x_3 + x_4 \equiv 0,1$, or $2 \; (\mod 3)$.
Game NIM$(k+1,k)$  is split into  $k$  disjoint subgames,  
which makes the general case analysis even more difficult. 
Interestingly, the $m$-critical positions, 
unlike the P-positions, are defined 
by the same formulas for all $k$  subgames.  

\subsection{Related works}
The ancient game of NIM was solved 
in 1901 by Bouton \cite{Bou901}. 

In 1907 Wythoff \cite{Wyt907} introduced and 
solved modified NIM 
with two piles of stones, where 
by one move a player can either reduce one pile, 
or both, but then by the same number of stones. 
Wythoff found the P-positions.
The game and its generalizations were studied 
in many works; see, for example, 
[2, 13, 14, 16–18, 22, 37, 39]. 
% \cite{BF90,Cox53,DFGHKL19,Fra82,Fra84,Fra05,Gur12,BG015,Str98,YY67}.

The SG function of this game is still unknown 
despite many efforts; 
see, for example, \cite{BF90,Con76,Niv09}. 
More precisely, it is unknown 
whether SG value $\G(x)$  can be computed 
in polynomial time for a given position $x$. 
In contrast, the remoteness function $\R(x)$ 
can be computed in polynomial time 
for the Wythoff game and several its generalizations 
\cite{preprint}. 

In 1910 Moore introduced and solved $k$-NIM. 
By one move a player can reduce 
at most $k$  piles, and at least 1.  
Moore found the P-positions. 
The SG-function is known only in the case 
$n = k+1$; see Jenkins and Mayberry \cite{JM80}. 
Recently, their result was considerably strengthened 
and extended from hypergraph  $k$-NIM with $n = k+1$  piles 
to the so-called 
hypergraph NIM on arbitrary JM hypergraphs; 
see  \cite{BGHM15,BGHMM15,BGHMM17,BGHMM18} 
for details. 

For Moore's game, there is a simple formula 
for $\R(x)$  when  $x$  is a P-position; 
in contrast, $\R(x)$ is NP-hard to compute 
when $x$  is an N-position;
see \cite{preprint}. 

Both SG and remoteness functions 
can be computed in polynomial time 
for the game Euclid \cite{CD69}; 
see also \cite{Fra05,Gur07,Len03,Len04}. 
A very simple and elegant formula  
for the SG function was given 
in 2006 by Nivasch \cite{Niv06}.
A polynomial algorithm computing 
the remoteness function was recently suggested 
in \cite{preprint}.

\section{Proofs of Theorems~\ref{th:critical} and~\ref{th:main}}\label{sec:proofs12}

Here we always assume that coordinates in a position 
%vector 
$x$ are ordered in the non-decreasing order and after a move coordinates are re-ordered.

In the case $k=1$, the theorems  trivially hold. We assume in this section that $k>1$.

\subsection{Function \boldmath $\B(x)$}

We introduce an auxiliary function $\B(x)$ using conditions (A) from Theorem~\ref{th:critical}. Let $|x| = \sum_{i=1}^{k+1}x_i$ be the $\ell_1$-norm of $x$. A~position $z=(z_1,\dots,z_{k+1})$ is called \emph{basic}, if 
\begin{align}
  &|z| = kb(z),\label{m-def}\\
  &z_i\leq b(z),\label{m-bound}\\
  &\text{\parbox{7cm}{for even $b(z)$ all $z_i$ are even,\\ and for odd $b(z)$ exactly one $z_i$ is even.}}\label{m-parity}
\end{align}
Then
\begin{equation}\label{eq:G-def}
\B(x) = \max (b(z) : x\succeq z,\ z\ \text{is basic}).
\end{equation}
It is clear that $\B(z) = b(z)$ for every basic position $z$. In general,  for a basic position~$z$ such that $b(z) = \B(x)$,  we say that $z$ \emph{supports} $x$ (or $x$ is supported by $z$). 
If $\B(x)$ is even, then $x$ is called $B$-even. Otherwise, it is called $B$-odd.

In general, coordinates in a basic position are not ordered. But it does not matter due to the following observation. 
\begin{claim}\label{ordered}
  If   $z\preceq x$ for a basic $z$, then there exists a
  basic position $z^*$ such that $z^*\preceq x$,  and $b(z^*) = b(z)$, and   coordinates of   $z^*$ are ordered in the non-decreasing order.
\end{claim}
\begin{proof} All the conditions~\eqref{m-def}, \eqref{m-bound} and \eqref{m-parity} are invariant  under  permutations of coordinates. If $x$ is ordered and $z_i>z_j$ for $i<j$, then $x_j\geq x_i\geq z_i$ and $x_i\geq z_i> z_j$. Thus, transposition of $z_i$ and $z_j$ gives a basic position. Repeating the transpositions, we come to an ordered basic $z^*$ such that  $z^*\preceq x$.
\end{proof}

For ordered basic positions we have a useful inequality.

\begin{claim}\label{12bound}
 If  $z$ is an ordered basic position, then $z_1+z_2 \geq b(z)$.
\end{claim}
\begin{proof}
The arithmetic mean of $z_1+z_2$, $z_i$, $i\geq3$, is exactly $b(z)$. The claim 
follows from  $z_i\leq b(z)$ (the bound condition~\eqref{m-bound}).
\end{proof}

From this claim we immediately get a useful corollary.

\begin{corollary}\label{z1-fact}
  If  $z$ is an ordered basic position, then $z_1\geq0$. If $z_1=0$ then $z=\big(0, b(z),\dots, b(z)\big)$.
\end{corollary}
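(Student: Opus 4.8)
The plan is to read both assertions directly off Claim~\ref{12bound} together with the bound condition~\eqref{m-bound}. First I would prove $z_1\geq 0$. Since $z$ is ordered non-decreasingly, $z_2$ is one of its coordinates, so the bound condition~\eqref{m-bound} gives $z_2\leq b(z)$. Claim~\ref{12bound} supplies the matching lower bound $z_1+z_2\geq b(z)$. Subtracting, $z_1\geq b(z)-z_2\geq 0$, which is the first assertion.

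For the second assertion I would specialize to $z_1=0$. Then Claim~\ref{12bound} forces $z_2\geq b(z)$, while the bound condition still gives $z_2\leq b(z)$; hence $z_2=b(z)$. Because the coordinates are non-decreasing and each is at most $b(z)$, the chain $b(z)=z_2\leq z_3\leq\dots\leq z_{k+1}\leq b(z)$ pins every remaining coordinate to $b(z)$, so $z=\big(0,b(z),\dots,b(z)\big)$. One can double-check consistency with the norm condition~\eqref{m-def}: indeed $|z|=0+k\,b(z)=k\,b(z)$, as required.

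There is essentially no obstacle here: the whole content is the interplay between the lower bound coming from Claim~\ref{12bound} and the upper bound coming from~\eqref{m-bound}, and the monotonicity of the ordering does the rest. The only point worth flagging is that nonnegativity of $z_1$ is genuinely being derived, since the three defining conditions~\eqref{m-def}--\eqref{m-parity} of a basic position do not postulate it; thus this corollary also confirms that every ordered vector meeting those conditions is a legitimate position.
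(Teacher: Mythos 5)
Your proof is correct and is exactly the argument the paper intends: the corollary is stated as an immediate consequence of Claim~\ref{12bound} combined with the bound condition~\eqref{m-bound} and the non-decreasing ordering, which is precisely how you derive both $z_1\geq 0$ and the rigidity $z=\big(0,b(z),\dots,b(z)\big)$ when $z_1=0$.
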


A move in NIM$(k+1,k)$ preserves exactly one coordinate. We use this coordinate to indicate the move. Thus, a move~$i$ from a~position $x$ leads to the position $x-\mv{i}$, where $\mv{i}_j = 1-\delta_{ij}$ ($\delta_{ij} $ is the Kronecker symbol). 

Let $x\to x'$ be a move in NIM$(k+1,k)$. To prove main results, we relate  $\B(x)$ and $\B(x')$ in  claims below.  

\begin{claim}\label{terminal}
  If $\B(x)=0$ then $x$ is a terminal position in \textup{NIM}$(k+1,k)$.
\end{claim}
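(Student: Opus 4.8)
The plan is to prove the contrapositive: if $x$ is \emph{not} terminal, then $\B(x)\geq 1$, so in particular $\B(x)\neq 0$. First I would pin down the terminal condition. A move~$i$ in NIM$(k+1,k)$ sends $x$ to $x-\mv{i}$, which is a legal position exactly when every coordinate other than the $i$-th is positive; hence some legal move exists if and only if at most one coordinate of $x$ vanishes. Under the standing convention that coordinates are listed in non-decreasing order, this means $x$ is terminal precisely when $x_1=x_2=0$, and $x$ is non-terminal precisely when $x_2\geq 1$ (whence $x_i\geq 1$ for all $i\geq 2$).

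Next I would exhibit a single explicit basic witness. Consider the ordered position $z=(0,1,1,\dots,1)$ with one zero coordinate and $k$ ones. I claim $z$ is basic with $b(z)=1$: its $\ell_1$-norm is $|z|=k=k\cdot 1$, which forces $b(z)=1$ via~\eqref{m-def}; the bound~\eqref{m-bound} holds since each coordinate is at most $1=b(z)$; and the parity condition~\eqref{m-parity} holds because $b(z)=1$ is odd and exactly one coordinate, namely $z_1=0$, is even. This is also consistent with Corollary~\ref{z1-fact}, which predicts the shape $(0,b(z),\dots,b(z))$ when the smallest coordinate is $0$.

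Finally I would combine the two steps. If $x$ is non-terminal, then $x_1\geq 0=z_1$ and $x_i\geq 1=z_i$ for every $i\geq 2$, so $x\succeq z$; hence the definition~\eqref{eq:G-def} of $\B$ gives $\B(x)\geq b(z)=1>0$. This contradicts $\B(x)=0$, so any position with $\B(x)=0$ must be terminal, as required.

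I do not anticipate a genuine obstacle: the argument is a short verification once the terminal condition is correctly read off from the move rule. The only point requiring care is matching the parity clause of~\eqref{m-parity} against the candidate witness $(0,1,\dots,1)$, i.e.\ confirming that ``exactly one coordinate is even'' is compatible with the odd value $b(z)=1$; everything else follows immediately from the ordering convention and the definition of a legal move.
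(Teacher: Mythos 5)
Your proof is correct and is essentially the paper's own argument: the paper also splits on whether $x_2>0$, uses the same witness $z=(0,1,\dots,1)$ with $b(z)=1$ to conclude $\B(x)\geq 1$, and notes that $x_1=x_2=0$ means $x$ is terminal. Your contrapositive framing and the explicit verification of conditions \eqref{m-def}--\eqref{m-parity} are just a slightly more detailed write-up of the same reasoning.
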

\begin{proof}
  If $x_2>0$ then $z=\mv{1}=(0,1,\dots,1)\preceq x$, $b(z)=1$, the parity condition~\eqref{m-parity} and the bound condition~\eqref{m-bound} hold. Thus $z$ is basic and $\B(x)\geq 1$.

  If $x_1=x_2=0$ then $x$ is terminal.
\end{proof}

\begin{claim}\label{Bnonincrease}
  If $x\to x'$ is a move in \textup{NIM}$(k+1,k)$ then $\B(x')\leq \B(x)$.
\end{claim}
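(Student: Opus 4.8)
The plan is to show that $\B$ is monotone with respect to the order $\succeq$ and that a single move never increases a position in that order; the claim then follows immediately. The internal structure of basic positions (the defining conditions \eqref{m-def}, \eqref{m-bound}, \eqref{m-parity}) plays no role here beyond providing the feasible set of the maximum that defines $\B$ in \eqref{eq:G-def}.

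First I would record two elementary order-theoretic facts. A move $x \to x' = x - \mv{i}$ keeps coordinate $i$ and decreases every other coordinate by one, so in the original (unsorted) labeling $x'_j \le x_j$ for all $j$. Since the sorting map is monotone---whenever $a_j \le b_j$ for all $j$, the non-decreasing rearrangements satisfy $a^*_j \le b^*_j$ for all $j$---this yields $x \succeq x'$. I would also note that $\succeq$ is transitive: from $x \succeq x'$ and $x' \succeq z$ one gets $x^*_j \ge x'^*_j \ge z^*_j$ for every $j$, hence $x \succeq z$.

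With these in hand the argument is short. Let $z$ be any basic position with $x' \succeq z$; by transitivity $x \succeq z$, so $z$ is admissible in the maximum \eqref{eq:G-def} defining $\B(x)$ as well. Thus the collection of basic positions dominated by $x'$ is contained in the collection of basic positions dominated by $x$, and the maximum of $b(\cdot)$ taken over the smaller collection cannot exceed the maximum over the larger one. Therefore $\B(x') \le \B(x)$.

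I do not expect a genuine obstacle here: the statement is essentially the observation that $\B$ is order-monotone and that moves descend in the order $\succeq$. The only points requiring care are the two lemmas above---monotonicity of the sorting map and transitivity of $\succeq$---both of which are routine once the definition of $\succeq$ from Section~\ref{ss04} is unfolded. The real difficulty is deferred to the complementary estimates (controlling by how much $\B$ can actually drop along a move), which is where the later claims must exploit the parity and bound conditions rather than mere monotonicity.
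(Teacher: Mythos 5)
Your proof is correct and is essentially the paper's own argument: the paper likewise takes a basic position $z'$ supporting $x'$ and notes $z'\preceq x'\preceq x$, so $z'$ is feasible in the maximum defining $\B(x)$, giving $\B(x)\geq \B(x')$. The only difference is that you explicitly verify the two order-theoretic facts (moves descend in $\succeq$ via monotonicity of sorting, and transitivity of $\succeq$) that the paper leaves implicit.
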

\begin{proof}
  Let $z'$ be a basic position supporting $x'$. Then $z'\preceq x'\preceq x$ and $\B(x)\geq \B(x')$.
\end{proof}

\begin{claim}\label{DeltaGodd}
  If $x\to x'$ is a move in \textup{NIM}$(k+1,k)$ from a $B$-odd position~$x$, then  $\B(x') \geq \B(x)-2$.
\end{claim}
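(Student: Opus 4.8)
The plan is to start from a basic position $z$ that \emph{supports} $x$, so that $z\preceq x$ and $b(z)=\B(x)=:b$ with $b$ odd, and to construct from it a basic position $z'$ with $b(z')=b-2$ satisfying $z'\preceq x'$; by the definition~\eqref{eq:G-def} of $\B$ this immediately gives $\B(x')\geq b-2$. If $b=1$ the statement is vacuous, since $\B\geq 0$ always, so I assume $b\geq 3$, and by Claim~\ref{ordered} I may take $z$ ordered.

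The first ingredient is a uniform lower bound on $x'$ that frees me from tracking which coordinate the move preserves. Since a move subtracts $1$ from all coordinates but one, we have $x'\geq x-\bone$ entrywise; as sorting is monotone this yields $x'\succeq x-\bone$ after reordering. Combined with $x_j\geq z_j$ in the common non-decreasing indexing, it suffices to build $z'$ with $z'_j\leq z_j-1$ for every $j$: then $z'_j\leq x_j-1\leq (\text{sorted }x')_j$ entrywise, and since sorting preserves domination against an already sorted vector, $z'\preceq x'$. Thus the whole problem reduces to producing, below $z-\bone$, a basic position of value $b-2$.

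The construction is a controlled descent of $b$ by $2$. Starting from $z$ I subtract $2$ from all but two coordinates and $1$ from the remaining two; this lowers $|z|$ by exactly $2k$, so $|z'|=kb-2k=k(b-2)$, and it keeps $z'_j\leq z_j-1$ as required. The two coordinates receiving $-1$ must be chosen so that the parity condition~\eqref{m-parity} for the odd value $b-2$ holds, i.e.\ exactly one coordinate of $z'$ remains even. Writing $p$ for the position of the unique even coordinate of $z$, a short parity count shows the correct choice is to decrement $z_p$ and one \emph{odd} coordinate $z_q$ (with $q\neq p$) by $1$, and everything else by $2$: then the flipped $z_q$ becomes the unique even entry while $z_p$ turns odd. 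The bound $z'_i\leq b-2$ holds automatically for the coordinates decremented by $2$, and for $z_p,z_q$ it follows since an even $z_p$ and the chosen $z_q<b$ are both $\leq b-1$.

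The main obstacle is that this generic choice can fail, and that is exactly where the degenerate family enters. A valid $q$ (an odd coordinate strictly below $b$) need not exist, and a coordinate decremented by $2$ could become negative. Here Claim~\ref{12bound} and Corollary~\ref{z1-fact} do the work: $z_1+z_2\geq b\geq 3$ forces at most one coordinate to be $\leq 1$, so the only nonnegativity risk is $z_1=1$, which I absorb by taking $q=1$; and if no odd coordinate lies below $b$, then all odd coordinates equal $b$, forcing $z_p=0$ and hence, by Corollary~\ref{z1-fact}, $z=(0,b,\dots,b)$. This single degenerate case I treat directly: the position $z'=(0,b-2,\dots,b-2)$ is basic with $b(z')=b-2$, and since the support forces $x_j\geq b$ for the top $k$ coordinates, we get $(\text{sorted }x')_j\geq b-1\geq b-2$ there, whence $z'\preceq x'$. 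I expect the parity bookkeeping, together with isolating and disposing of this degenerate case, to be the only delicate points; the remaining bound and nonnegativity checks are routine consequences of Claim~\ref{12bound}.
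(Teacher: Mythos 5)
Your proof is correct and takes essentially the same route as the paper's: both build from a supporting basic position $z$ a new basic position of value $b(z)-2$ by subtracting $1$ from the unique even coordinate and from one suitable odd coordinate (and $2$ from all others), both use Claim~\ref{12bound} for nonnegativity, and both dispose of the degenerate case $z=(0,b,\dots,b)$ via Corollary~\ref{z1-fact}. The only cosmetic difference is that the paper fixes the odd coordinate to be $z_1$ (or $z_2$ when the even coordinate is $z_1$), whereas you allow any odd coordinate strictly below $b$ and make the reduction $z'_j\leq z_j-1\Rightarrow z'\preceq x'$ explicit.
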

\begin{proof}
  Let $z$ be an ordered basic position that supports $x$ and $z_p$ is even (other coordinates in $z$ are odd). If $b(z)\leq 2$ then the claim trivially holds. For the rest of the proof assume that $b(z) \geq 3$. It implies $z_2\geq2$. If $p=2$, it follows from Claim~\ref{terminal}. If $p\ne2$, it follows from Claim~\ref{12bound}: $z_1+z_2\geq b(z)\geq 3$ and $z_1\leq z_2$ imply that   $z_2\geq 2$.
  
  Define $z'$ as follows.

  If $p>1$ then $z'_1=z_1-1$, $z'_p=z_p-1 $,   $z'_i= z_i-2$ for $i\notin \{1,p\}$.

  If $p=1$ and $z_1>0$ then  $z'_1=z_1-1$, $z'_2=z_2-1 $,   $z'_i= z_i-2$ for $i\geq3$.

In both cases, $b(z') =b(z)-2 $, the parity condition holds for $z'$, as well as the bound condition. Thus, $z'$ is basic.
  
  If $p=1$ and $z_1=0$ then $z=\big(0,b(z),\dots, b(z)\big)$ due to Corollary~\ref{z1-fact}. In this case $z'_1 = 0$, $z'_i = z_i-2$ for $i\geq2$.
It is clear that $z'$ is basic and $b(z')= b(z)-2$.
  
In all three cases $0\preceq z'\preceq x'$. Thus $\B(x')\geq b(z') = b(z)-2=\B(x)-2$.
\end{proof}

\begin{claim}\label{ToGeven}
  If $x\to x'$ is a move in \textup{NIM}$(k+1,k)$ to a $B$-even position~$x'$, then  $\B(x')< \B(x)$.
\end{claim}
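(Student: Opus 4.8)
The plan is to reduce the strict inequality to the construction of a single witness. Since Claim~\ref{Bnonincrease} already gives $\B(x')\le\B(x)$, it suffices to exhibit a basic position $w$ with $w\preceq x$ and $b(w)=\B(x')+1$, for then $\B(x)\ge b(w)=\B(x')+1>\B(x')$. Write $b=\B(x')$; since $x'$ is $B$-even, $b$ is even. Using Claim~\ref{ordered} I would fix an ordered basic position $z'$ that supports $x'$, so that $z'\preceq x'$, $b(z')=b$, and, by the parity condition~\eqref{m-parity} for the even value $b$, every coordinate of $z'$ is even. The idea is to ``undo'' the move on $z'$: the move $x\to x'$ removed one stone from all but one pile, so I add one stone back to the coordinates that were reduced, turning the all-even $z'$ into a position with value $b+1$.

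Concretely, the move keeps some pile $p$ of the sorted vector $x$, i.e. the intermediate vector $y$ has $y_p=x_p$ and $y_j=x_j-1$ for $j\ne p$, and $x'=\mathrm{sort}(y)$. Let $\sigma$ be the sorting permutation, so that $x'_\ell=y_{\sigma(\ell)}$, and let $\ell_0=\sigma^{-1}(p)$ be the index of the kept pile after re-sorting. I then define $w$ by $w_{\ell_0}=z'_{\ell_0}$ and $w_\ell=z'_\ell+1$ for $\ell\ne\ell_0$. This adds $1$ to exactly $k$ coordinates, so $|w|=kb+k=k(b+1)$, giving~\eqref{m-def} with $b(w)=b+1$; all coordinates of $z'$ are even, so $w$ has exactly one even coordinate, namely $w_{\ell_0}$, which is precisely the parity condition~\eqref{m-parity} for the odd value $b+1$; and $w_\ell\le z'_\ell+1\le b+1$ yields the bound condition~\eqref{m-bound}. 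Hence $w$ is basic with $b(w)=b+1$.

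The remaining and most delicate point is the domination $w\preceq x$, where the main obstacle is the re-sorting that intervenes between $x$ and $x'$: the move is described on the sorted $x$, but $z'$ is aligned with the re-sorted $x'$, so the two index sets must be reconciled through $\sigma$. I would handle this by checking the inequality coordinate by coordinate along $\sigma$: for each $\ell$, if $\ell\ne\ell_0$ then $w_\ell=z'_\ell+1\le x'_\ell+1=y_{\sigma(\ell)}+1=x_{\sigma(\ell)}$ (since $\sigma(\ell)\ne p$), while for $\ell=\ell_0$ we have $w_{\ell_0}=z'_{\ell_0}\le x'_{\ell_0}=y_p=x_p=x_{\sigma(\ell_0)}$. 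Thus the bijection $\sigma$ witnesses $w_\ell\le x_{\sigma(\ell)}$ for every $\ell$, and since such a coordinatewise domination through a permutation implies domination of the sorted vectors (the same permutation-invariance used in Claim~\ref{ordered}), we obtain $w\preceq x$. Therefore $\B(x)\ge b(w)=b+1>b=\B(x')$, which is the desired strict inequality.
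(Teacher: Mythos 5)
Your proof is correct and follows essentially the same route as the paper: both arguments take a basic position $z'$ supporting $x'$ and ``undo'' the move by adding back $\mv{i}$, producing a basic position dominated by $x$ with value $b(z')+1$, whence $\B(x)\geq\B(x')+1$. The only difference is that you carefully track the re-sorting permutation $\sigma$ when verifying domination, a bookkeeping point the paper's one-line proof ($z=z'+\mv{i}\preceq x$) glosses over; this extra care is valid and harmless.
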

\begin{proof}
  If $x-x' = \mv{i}$ and  $z'$ supports $x'$ then $z=z'+\mv{i}\preceq x$, and $b(z) = b(z')+1$, and $z_i\leq b(z) $,  and the parity condition~\eqref{m-parity} holds for $z$: since  all coordinates are even in $z'$, exactly one coordinate is even in $z$. Thus $\B(x)\geq \B(z) = \B(z')+1 = \B(x')+1$.
\end{proof}

\begin{lemma}\label{GevenGeven}
  There are no moves from  $B$-even positions to  $B$-even positions in \textup{NIM}$(k+1,k)$.
\end{lemma}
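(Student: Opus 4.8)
The plan is to argue by contradiction and to complement Claim~\ref{ToGeven} with a matching upper bound on how much a single move can decrease $\B$ when it lands on a $B$-even position. Suppose some move $x\to x'=x-\mv{i}$ had both $x$ and $x'$ $B$-even, and let $z$ and $w$ be ordered basic positions supporting $x$ and $x'$, respectively, so that $z,w$ have all coordinates even, $b(z)=\B(x)$ and $b(w)=\B(x')$. Claim~\ref{ToGeven} already gives $\B(x')<\B(x)$, hence $\B(x')\le\B(x)-2$ since both values are even. I would finish by proving the reverse estimate $\B(x)\le\B(x')+1$: combined with Claim~\ref{ToGeven} it forces $\B(x)=\B(x')+1$, which is odd and contradicts $x$ being $B$-even. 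Equivalently, this shows that every move \emph{into} a $B$-even position lowers $\B$ by exactly one, so the source position is necessarily $B$-odd.

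To prove $\B(x)\le\B(x')+1$ I would construct, from the supporting position $z$ of $x$, an \emph{odd} basic position $z'\preceq x'$ with $b(z')=b(z)-1$; this yields $\B(x')\ge b(z)-1=\B(x)-1$. The case $b(z)\le1$ is trivial by Claim~\ref{terminal}, so assume $b(z)\ge2$. Not all coordinates of $z$ can equal $b(z)$, for otherwise $|z|=(k+1)b(z)>kb(z)$; hence $z$ has a coordinate strictly below $b(z)$. The construction keeps one such non-maximal coordinate $z_p$ and reduces every other coordinate of $z$ by one, the boundary case $z_1=0$ being transparent since Corollary~\ref{z1-fact} then gives $z=\big(0,b(z),\dots,b(z)\big)$, so one keeps $p=1$. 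The resulting $z'$ satisfies $|z'|=k(b(z)-1)$, meets the bound condition~\eqref{m-bound} (indeed $z_p\le b(z)-2$ as $b(z)$ is even), and has precisely one even coordinate, namely $z_p$, so it satisfies the parity condition~\eqref{m-parity} for the odd value $b(z)-1$; thus $z'$ is basic.

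The heart of the argument, and the step I expect to be the main obstacle, is verifying $z'\preceq x'$ in the sorted order. Reducing all-but-one coordinate of $z$ interacts with the re-sorting forced by the move $x\to x'=x-\mv{i}$, and the preserved coordinate $z_p$ must be chosen compatibly with the coordinate $i$ preserved by the move. Describing $\preceq$ through its threshold counts $\#\{j: x_j\ge\theta\}$, one sees that $z'\preceq x'$ can fail in a single tight configuration: when the move keeps a maximal coordinate of $x$ while the upper counts of $z$ already coincide with those of $x$ just above every sub-maximal even level of $z$. This is exactly the phenomenon behind $(2,2,4,4)\to(1,1,3,4)$ in \textup{NIM}$(4,3)$, where $\B$ drops from $4$ to $1$; there the target is $B$-odd, consistently with the lemma, and this is precisely why the estimate $\B(x)\le\B(x')+1$ genuinely needs the hypothesis that $x'$ is $B$-even. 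The key point is that the obstructing configuration cannot occur when $x'$ is $B$-even: here I would use the even basic support $w$ of $x'$ to contradict the coincidence of upper counts, thereby guaranteeing a compatible choice of $z_p$ and the inclusion $z'\preceq x'$. With $z'$ in hand we get $\B(x')\ge b(z)-1$, completing the upper bound and hence the proof.
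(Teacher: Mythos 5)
Your overall strategy mirrors the paper's own proof: assume a move $x\to x'=x-\mv{i}$ between two $B$-even positions, use Claim~\ref{ToGeven} to get $\B(x')\le\B(x)-2$, and contradict this by producing an odd basic position of value $b(z)-1$ dominated by $x'$, built from the support $z$ of $x$ by keeping one suitably chosen non-maximal coordinate $z_p$ and decrementing the rest. Your easy cases are exactly the paper's first three cases and are handled correctly: $z_1=0$ (keep $p=1$, using Corollary~\ref{z1-fact}); $z_i<b(z)$ (keep $p=i$); and some $j$ with $z_j<\min(x_j,b(z))$ (keep $p=j$). Your example $(2,2,4,4)\to(1,1,3,4)$ in NIM$(4,3)$ is also correct ($\B$ drops from $4$ to $1$, the target being $B$-odd) and correctly shows why the $B$-evenness of $x'$ is indispensable.

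The gap is the remaining case --- $z_1\ge1$, $z_i=b(z)$, and every coordinate $z_j$ is either tight ($z_j=x_j$) or maximal ($z_j=b(z)<x_j$) --- which is precisely the case the paper spends most of its proof on, and which you dispatch with ``I would use the even basic support $w$ of $x'$ to contradict the coincidence of upper counts.'' That sentence is a statement of intent, not an argument. The actual derivation requires two separate observations: for tight coordinates, $x'_j=z_j-1$ is odd while $w_j$ is even, so parity forces $w_j\le x'_j-1$; for maximal coordinates, $x'_j\ge b(z)$ while $w_j\le b(w)\le b(z)-2$, so $w_j\le x'_j-2$. Together these give $w_j+1\le x'_j$ for \emph{every} $j$, whence $w+\mv{j}$ is a basic position dominated by $x'$ with value $b(w)+1$, contradicting the maximality of $w$ as a support of $x'$. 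Note also that your phrasing ``thereby guaranteeing a compatible choice of $z_p$'' misdescribes the logic: in the obstructing configuration no compatible $p$ exists by definition, and the contradiction comes from the entirely different construction of improving $w$, not from resurrecting your $z'$. Without this derivation the proof is incomplete at exactly its crucial point.
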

\begin{proof}
  Suppose for the sake of contradiction that $x\to x' = x-\mv{i}$ is a move and $x$, $x'$ are $B$-even.

  Let $b = \B(x)$ and $z$ be an ordered basic position supporting $x$. Consider several cases. It follows from  Claim~\ref{ToGeven} that $\B(x')\leq b-2$.

  The case $z_1=0$. In this case  $z= \big(0,b(z),\dots, b(z)\big)$, due to Corollary~\ref{z1-fact}, and $x'_j\geq x_j-1\geq z_j-1$ for $j>1$, and $x'_1\geq 0=z_1$.
 Hence, $z'=z-\mv{1}$ is basic and $z' \preceq x'$. It implies that $\B(x')\geq b(z') = b(z)-1= \B(x)-1$.  We come to a~contradiction:  $b-2\geq \B(x')\geq b(z')= b-1$.

   The case $z_1\geq1$ and $z_i< b$. Let us prove that $z' = z-\mv{i}$ is a basic position and $z'\preceq x'$. The latter is clear from the definitions. Since $b(z')=b(z)-1$ is odd, the parity condition~\eqref{m-parity} holds. The bound condition~\eqref{m-bound} is due to the assumption~$z_i< b$. Again, we come  to a~contradiction:  $b-2\geq \B(x')\geq b(z')= b-1$.

   The case $z_1\geq1$, and $z_i= b$, and there exists $j< i$ such that $z_j<\min(x_j,b)$. Now $z' = z-\mv{i}$ violates the bound condition~\eqref{m-bound}, but the conditions~\eqref{m-def},~\eqref{m-parity} hold by the same reasons as before. That is, $b(z')$ is odd and the only even position in $z'$ is~$i$. Take $z'' = z-\mv{j}$. In other words, 
\[
z''_r = \left\{
\begin{aligned}
  &z'_r = z_r-1, &&\text{if}\ r\ne i, \ r\ne j,\\
  &z'_j+1 = z_j, &&\text{if}\ r=j,\\
  &z'_i-1 =z_i-1, &&\text{if}\ r=i.
\end{aligned}\right.
\]
The conditions~\eqref{m-def},~\eqref{m-parity} hold for $z''$  and $b(z'')=b-1$. Since $z_j<x_j$, we get $z''\preceq x'$. From $z_j<b$ we get the bound condition~\eqref{m-bound}. Again, we come  to a~contradiction:  $b-2\geq \B(x')\geq b(z'')= b-1$.

The remaining case $z_1\geq1$, and $z_i= b$, and for all $j$ either $z_j=x_j$ or  $z_j=b$. Since $z$ is ordered, the former holds for $j<t$ and the latter holds for $j\geq t$. W.l.o.g. we assume that $z_j<x_j$ for $j\geq t$. Note that $t=k+2$ is possible (in this case $z = x$ and, w.l.o.g., $i=k+1$). Take a $B$-even ordered basic position $z'$ supporting $x'$ (in particular, $z'\preceq x'$). It follows from Claim~\ref{ToGeven} that $b(z')\leq b-2$. All $z'_j$ are even since $x'$ is $B$-even. Therefore $z'_j< x'_j = x_j-1 = z_j-1$ for $j<t$, since the former is even and the latter is odd and $z'_j \leq b-2 < x_j-2 = x'_j-1$ for $j\geq t$. Thus $z''=z'+\mv{j}\preceq x'$ for all $j$ and $z''$ is a basic position such that $b(z'') = b(z')+1 $. It implies that $\B(x')\geq b(z')+1 = \B(x')+1$, a~contradiction.
\end{proof}

Positions satisfying the condition (B) in Theorem~\ref{th:critical} are called \emph{exceptional}. We establish several facts about exceptional positions.

\begin{claim}\label{exceptional0}
  $\B(x)<m(x)$ for an exceptional position $x$.
\end{claim}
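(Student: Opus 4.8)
The plan is to establish $\B(x) < m(x)$ for an exceptional position $x$ by unpacking both definitions and producing a single basic position $z$ with $z \preceq x$ that forces $\B(x)$ up to $m - 1$ while simultaneously ruling out any basic $z'$ with $b(z') \geq m$. Recall that an exceptional $x$ satisfies condition (B): $|x| = km + k - 1$, $\max_i x_i < m$, $m$ is even, and $x$ is odd (all coordinates odd). Here I must be careful about the meaning of $m(x)$ in this claim: since $x$ is exceptional, it is itself $m$-critical in case (B), so $m(x) = m$. The goal is therefore to show $\B(x) \leq m - 1$.

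First I would prove the \emph{lower bound} $\B(x) \geq m-1$ by exhibiting a basic $z$ supporting the value $m-1$. Since $m$ is even, $m-1$ is odd, so the parity condition~\eqref{m-parity} for a basic position $z$ with $b(z) = m-1$ demands exactly one even coordinate and the rest odd. The natural candidate reduces one coordinate of $x$: because all $x_i$ are odd and $\max_i x_i \leq m-1$, setting $z$ equal to $x$ except for lowering a single suitable coordinate by one (to make it even) should satisfy $z \preceq x$, the bound condition $z_i \leq m-1$, and give $|z| = km + k - 2 = k(m-1) + (k-1)$. The obstruction is that $|z|$ must equal exactly $k(m-1)$ for the norm condition~\eqref{m-def}; the residual $k-1$ means I cannot simply shave off one unit, so I would instead reduce $x$ down by a total of $k-1$ units distributed among the coordinates, keeping all but one odd and respecting $z_i \leq m-1$. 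Verifying that such a distribution exists, using $\max_i x_i < m$ together with $x$ being odd, is the delicate bookkeeping step.

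Next I would prove the \emph{strict upper bound}, i.e. that no basic $z'$ with $b(z') \geq m$ satisfies $z' \preceq x$. Suppose such a $z'$ existed. By Claim~\ref{ordered} we may assume $z'$ is ordered, and by Claim~\ref{12bound} we have $z'_1 + z'_2 \geq b(z') \geq m$. But $z' \preceq x$ forces, coordinatewise on the ordered vectors, $z'_i \leq x^*_i \leq \max_i x_i \leq m-1$ for every $i$; comparing the total norms, $|z'| = k \cdot b(z') \geq km$, while $|z'| \leq |x| = km + k - 1$. This gives $b(z') = m$ and pins $|z'|$ between $km$ and $km + k - 1$, forcing $b(z') = m$ and leaving almost no slack. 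I would then run a parity argument: for $b(z') = m$ even, all coordinates of $z'$ are even, whereas the tight norm and bound constraints combined with $x$ being odd (so $x^*_i \leq m-1$ with $m-1$ odd) should produce a contradiction, because an even $z'_i \leq m-1$ satisfies $z'_i \leq m-2$, making $|z'| \leq (k+1)(m-2)$, which undercuts $km$ once $k$ is moderately large.

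The main obstacle I anticipate is the upper-bound argument for small $b(z')$ and the careful handling of the per-coordinate parity mismatch: the inequality $z'_i \leq m-2$ for even $z'_i$ is exactly what drives the contradiction, but I must confirm it dominates the norm budget $km$ across the full range, rather than only asymptotically. I would isolate this as the key inequality $(k+1)(m-2) < km$, equivalently $m < 2(k+1) - \dots$, and where it fails I would fall back on the finer structure of the ordered vector and Corollary~\ref{z1-fact} to handle the extremal coordinates directly. Once both bounds are in place, $\B(x) = m-1 < m = m(x)$ follows, and since $m-1$ is odd this also records that exceptional positions are $B$-odd, a fact likely needed downstream.
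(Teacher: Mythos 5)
There is a genuine gap, and it sits in the only half of your plan that the claim actually needs. Claim~\ref{exceptional0} is purely an upper bound, so your lower-bound construction ($\B(x)\geq m-1$) is superfluous here --- that fact is established later by other means (Claim~\ref{exceptional3}) --- and you leave it as unfinished ``bookkeeping'' anyway (note also that matching $|z|=k(m-1)$ requires removing $2k-1$ units from $x$, not $k-1$). The problem is in the upper bound. After correctly forcing $b(z')=m$, you argue: all $z'_i$ are even and $z'_i\leq m-1$, hence $z'_i\leq m-2$, hence $|z'|\leq (k+1)(m-2)<km$. That last inequality is equivalent to $m<2k+2$. But every exceptional position satisfies $m\geq 2k$, because $km+k-1=|x|\leq (k+1)(m-1)$ forces $m\geq 2k$; since $m$ is even, your inequality therefore covers only the single value $m=2k$ and fails for every exceptional position with $m\geq 2k+2$. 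The failure is not hypothetical: for the paper's own example $x=(3,5,5)$ in NIM$(3,2)$ (so $k=2$, $m=6$) one has $(k+1)(m-2)=12=km$, and indeed $z'=(4,4,4)$ satisfies all three conditions \eqref{m-def}--\eqref{m-parity} with $b(z')=6$; what fails is only $z'\preceq x$, which your bound through $\max_i x_i$ cannot detect. Your proposed fallback (``finer structure'' plus Corollary~\ref{z1-fact}) is exactly the open case, not an argument.

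The fix --- and the paper's actual proof --- is to run your parity-mismatch idea against $x$ coordinatewise instead of against the uniform bound $m-1$. Taking $z'$ ordered (Claim~\ref{ordered}), $z'\preceq x$ gives $z'_i\leq x_i$ for each $i$, and since $z'_i$ is even while $x_i$ is odd, in fact $z'_i\leq x_i-1$. Summing over all $k+1$ coordinates, $km=|z'|\leq |x|-(k+1)=km+k-1-k-1=km-2$, a contradiction valid for all $k$ and $m$ simultaneously. (The paper also obtains the non-strict bound $\B(x)\leq m$ more cheaply than you do: $k\B(x)=|z|\leq|x|=k(m+1)-1<k(m+1)$, with no need for Claim~\ref{12bound}.) So your outline contains the right ingredients --- pinning down $b(z')=m$ and exploiting the even/odd mismatch --- but the mismatch must be cashed in per coordinate against $x_i$, where it yields $k+1$ units of slack, rather than against $m-1$, where the slack evaporates as soon as $m\geq 2k+2$.
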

\begin{proof}
  Let $z$ be a basic position that supports $x$. Then
\[
k\B(x) = |z| \leq |x| = km(x) +k-1 = k\big(m(x)+1\big) -1,
\]
thus $\B(x)\leq m(x)$.

Suppose, for the sake of contradiction, that $\B(x)=m(x)\equiv0\pmod2$. It implies $z_i< x_i$ for  all $i$ (since the former is even and the latter is odd). Therefore
\[
k\B(x) = |z|\leq \sum_{i} (x_i-1) = km(x) +k-1 -k-1= km(x) -2,
\]
a contradiction.
\end{proof}

\begin{claim}\label{exceptional1}
  If $x\succeq z$, and $x\ne z$, and $z$ is exceptional then $\B(x)> m(z)$.
\end{claim}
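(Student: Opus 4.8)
The plan is to produce a single basic position that witnesses $\B(x)\ge m(z)+1$. Write $m=m(z)$ and recall that, since $z$ is exceptional, all coordinates of $z$ are odd, $|z|=km+k-1=k(m+1)-1$, $\max_i z_i<m$, and $m$ is even. Because $\B(x)$ is by definition the maximum of $b(w)$ over basic positions $w$ with $x\succeq w$, it suffices to construct one basic $w$ with $x\succeq w$ and $b(w)=m+1$. The natural candidate is obtained from $z$ by raising a single coordinate by one: this lifts the sum from $k(m+1)-1$ to $k(m+1)$, so that $b(w)=m+1$, and it flips exactly one coordinate from odd to even. Since $m+1$ is odd, the resulting ``exactly one even coordinate'' pattern is precisely the parity condition~\eqref{m-parity} demanded of a basic position with odd~$b$. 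So the whole argument reduces to choosing \emph{which} coordinate to raise.

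The only real work, and the step I expect to be the main obstacle, is to choose that coordinate so that $x\succeq w$ is preserved. I would pass to the ordered representatives $z^*$ and $x^*$ (non-decreasing), for which $x\succeq z$ means $z^*_i\le x^*_i$ for all $i$, while $x\ne z$ means at least one inequality is strict. Let $j$ be the \emph{largest} index with $z^*_j<x^*_j$; then $z^*_i=x^*_i$ for every $i>j$. Define $w$ by $w_i=z^*_i$ for $i\ne j$ and $w_j=z^*_j+1$. One checks that $w$ stays non-decreasing: on the left, $w_{j-1}=z^*_{j-1}\le z^*_j<w_j$; on the right (vacuous if $j$ is the last index), $z^*_j<x^*_j\le x^*_{j+1}=z^*_{j+1}$ gives $w_j=z^*_j+1\le z^*_{j+1}=w_{j+1}$. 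Moreover $w_j=z^*_j+1\le x^*_j$ and $w_i=z^*_i\le x^*_i$ for $i\ne j$, so $w\preceq x$. This sorting-and-domination bookkeeping is exactly where choosing the rightmost slack coordinate~$j$ is essential, since bumping an interior repeated value could force a re-sort and break the coordinatewise bound.

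It then remains to verify that $w$ is basic with $b(w)=m+1$, which is routine. The sum condition~\eqref{m-def} holds because $|w|=|z|+1=k(m+1)$, forcing $b(w)=m+1$. The bound condition~\eqref{m-bound} holds because $\max_i z_i<m$ yields $w_i\le z^*_i+1\le m<m+1=b(w)$. The parity condition~\eqref{m-parity} holds as noted: $b(w)=m+1$ is odd and $w$ has exactly one even coordinate. Hence $w$ is a basic position with $x\succeq w$, so $\B(x)\ge b(w)=m+1>m=m(z)$, which is the claim.
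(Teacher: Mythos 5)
Your proof is correct and takes essentially the same route as the paper's: both arguments increment a single coordinate of $z$ at which $x$ strictly dominates, then verify the sum, bound, and parity conditions to obtain a basic position $w\preceq x$ with $b(w)=m(z)+1$. Your extra bookkeeping with the rightmost slack coordinate is harmless but not needed, since basicness is permutation-invariant (Claim~\ref{ordered}) and coordinatewise domination already implies the $\preceq$ relation.
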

\begin{proof}
  Let $x_i>z_i$. Take $z' = z+e^{(i)}$, where $e^{(i)}_j = \delta_{ij}$. Then
\[
|z'|= 1+| z| = 1+ km(z) +k-1 = k(m(z)+1),
\]
$z'_i$ is even, while $z'_j$ is odd for all $j\ne i$,  and $z'_j<m(z)+1$. Thus $z$ is basic and $z'\preceq x$. So, $\B(x)\geq m(z)+1$.
\end{proof}

\begin{corollary}\label{exeptional2}
$\B(x)= m(x')+1$  for any move $x\to x'$ to an exceptional position~$x'$.
\end{corollary}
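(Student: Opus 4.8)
The plan is to establish the two matching inequalities $\B(x)\ge m(x')+1$ and $\B(x)\le m(x')+1$ separately; together they give the corollary, and each follows quickly from the facts already in hand. I would begin by recording the structure of the move: since a move in NIM$(k+1,k)$ preserves one coordinate and decreases the remaining $k$, we have $x = x'+\mv{i}$ for some $i$. Consequently $x\succeq x'$ with $x\ne x'$ (so in fact $x\succ x'$), and $|x| = |x'|+k$.

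For the lower bound I would apply Claim~\ref{exceptional1} with the exceptional position $x'$ playing the role of $z$: because $x\succeq x'$ and $x\ne x'$, that claim gives $\B(x) > m(x')$, i.e.\ $\B(x)\ge m(x')+1$.

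For the upper bound I would use an $\ell_1$-counting argument. Let $z$ be a basic position supporting $x$, so $z\preceq x$ and $\B(x) = b(z) = |z|/k$. Since $\preceq$ compares sorted vectors, it preserves the norm, so $z\preceq x$ yields $|z|\le |x|$. Using that $x'$ is exceptional, $|x| = |x'|+k = k(m(x')+2)-1$. As $|z|$ is a multiple of $k$ and strictly below $k(m(x')+2)$, we conclude $|z|\le k(m(x')+1)$, hence $\B(x)\le m(x')+1$. Combining the two bounds proves the equality.

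I do not expect a genuine obstacle here, since the heavy lifting has been done in Claims~\ref{exceptional0} and~\ref{exceptional1}. The one step deserving care is the integrality argument in the upper bound: the norm inequality alone only gives $\B(x) < m(x')+2$, and it is the exceptional residue $|x|\equiv -1 \pmod k$ together with $|z|$ being a multiple of $k$ that sharpens this to $\B(x)\le m(x')+1$ and makes the two bounds meet.
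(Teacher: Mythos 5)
Your proof is correct and follows essentially the same route as the paper: the lower bound $\B(x)\geq m(x')+1$ comes from Claim~\ref{exceptional1}, and the upper bound from the count $|x|=|x'|+k=k\big(m(x')+2\big)-1<k\big(m(x')+2\big)$ together with the fact that $k\B(x)=|z|\leq|x|$ for a supporting basic position $z$. Your explicit remark about the integrality step (that $|z|$ is a multiple of $k$) is exactly what the paper's terse ``Thus, $\B(x)\leq m(x')+1$'' relies on.
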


\begin{proof}
   Claim~\ref{exceptional1} implies $\B(x)\geq m(x')+1$. On the other hand,
  \[
  |x|=|x'|+k =  km(x')+k-1 +k < k\big(m(x')+2\big).
  \]
  Thus, $\B(x)\leq m(x')+1$.
\end{proof}

\begin{claim}\label{exceptional3}
  For any move $x\to x'$ from an exceptional position, $\B(x)=\B(x')=m(x)-1$  is odd. 
\end{claim}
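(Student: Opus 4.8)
The plan is to show that $\B(x)$ and $\B(x')$ are both squeezed to the single value $m(x)-1$. For an exceptional position we have $|x|=km(x)+k-1$, which is what fixes $m(x)$, and Claim~\ref{exceptional0} already gives the upper bound $\B(x)\le m(x)-1$, while Claim~\ref{Bnonincrease} gives $\B(x')\le\B(x)$. So it suffices to establish the one lower bound $\B(x')\ge m(x)-1$: then $m(x)-1\le\B(x')\le\B(x)\le m(x)-1$ forces $\B(x)=\B(x')=m(x)-1$, and this number is odd because $m(x)$ is even by condition (B).

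To get $\B(x')\ge m(x)-1$ I would exhibit a basic $z\preceq x'$ with $b(z)=m(x)-1$. Write $m=m(x)$ and let $x'=x-\mv{i}$. Since $x$ is odd, $x'_i=x_i$ is odd, every other coordinate $x'_j=x_j-1$ is even, and $|x'|=|x|-k=km-1$. A basic $z$ with $b(z)=m-1$ needs $|z|=k(m-1)=km-k$, so I must remove a total of $k-1$ from $x'$, leave exactly one even coordinate, and keep all entries $\le m-1$. The natural candidate keeps $z_i=x_i$, keeps one even coordinate of $x'$ untouched, and decreases each of the other $k-1$ even coordinates by $1$: this drops the sum by exactly $k-1$, turns those $k-1$ coordinates odd while leaving a unique even one, and respects the bound, so \eqref{m-parity} and \eqref{m-bound} hold and $z$ is basic.

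The one obstruction is \emph{nonnegativity}: an even coordinate of $x'$ that equals $0$, i.e.\ a coordinate $x_j=1$ with $j\ne i$, cannot be decreased. The key step is therefore a counting lemma: for $k\ge2$ an exceptional position has at most one coordinate equal to $1$. Indeed, if two coordinates were $1$, then bounding the remaining $k-1$ coordinates by $m-1$ gives $km+k-1=|x|\le 2+(k-1)(m-1)$, i.e.\ $2k+m\le4$, which is impossible since for an exceptional position $m$ is a positive even integer (so $m\ge2$). Hence $x'$ has at most one zero coordinate, and I would choose that zero (when it exists) as the even coordinate left untouched, so the $k-1$ coordinates I decrease are all $\ge2$ and remain nonnegative. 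This yields the required basic $z\preceq x'$, hence $\B(x')\ge m-1$, completing the argument.
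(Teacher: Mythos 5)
Your proof is correct and takes essentially the same route as the paper: both exhibit the very same basic position witnessing $\B(x')\ge m(x)-1$ (keep $z_i=x_i$, lower one other coordinate by $1$ so it becomes the unique even entry, lower the remaining $k-1$ coordinates by $2$), and then squeeze using Claim~\ref{exceptional0} together with the sum bound or Claim~\ref{Bnonincrease}. Your extra nonnegativity check is sound but stronger than necessary: for $k\ge 2$ an exceptional position has \emph{no} coordinate equal to $1$ at all, since $x_j=1$ would give $km+k-1=|x|\le 1+k(m-1)$, i.e.\ $k\le 1$; this is presumably why the paper omits the issue entirely.
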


\begin{proof}
  From Claim~\ref{exceptional0} we get $\B(x)<m(x)$. 
Let $x' = x-\mv{i}$ and $p=1$, if $i\ne 1$, $p=k+1$, if $i=1$. Define $z'$   as follows
\[
z'_j = \left\{\begin{aligned}
  &x_i <m(x), &&\text{if}\ j=i,\\
&x_p-1, &&\text{if}\ j=p,\\
&x_j-2, &&\text{otherwise.}
\end{aligned}\right.
\]
It follows from the definition that $z'\preceq x'\preceq x$, and
\[
| z'| = -1-2(k-1) +|x |= km(x) +k-1 +1 -2k = k\big(m(x)-1\big),
\]
and the only even coordinate in $z'$ is $p$. Since $m(x)$ is even, $z'$ is basic and $m(x) > \B(x)\geq m(x)-1$, $\B(x')\geq m(x)-1$. Note that 
\[
|x'|=\big(km(x) +k-1\big)-k = km(x)-1,
\]
thus $m(x)> \B(x')$.
Therefore, $\B(x) =\B(x') = m(x)-1 $.
\end{proof}

\subsection{Relation to \boldmath $\M(x) $ and $\R(x)$}

At first, we prove several facts about changes of $\B(x)$ on M-moves. We will use the following notation.
Let $e(x)$ be the maximal index such that $x_{e(x)}$ is the smallest even integer among $x_i$ if there are even $x_i$. Otherwise, if all $x_i$ are odd, then $e(x)=k+1$. The M-move at $x$ is $x\to x' = x-\mv{e(x)}$. Note that  defined by this rule $x'$ is ordered (assuming, as everywhere, that $x$ is ordered).

\begin{claim}\label{FromGeven}
   If $x\to x'= x-\mv{e(x)}$ is an M-move in \textup{NIM}$(k+1,k)$ from a $B$-even position~$x$, then  $\B(x') = \B(x)-1$.
\end{claim}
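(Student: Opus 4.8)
The plan is to sandwich $\B(x')$ between $b-1$ and $b-1$, where I write $b=\B(x)$. This $b$ is even and, since an M-move exists at all, at least $2$ (if $\B(x)=0$ then $x$ is terminal by Claim~\ref{terminal} and the statement is vacuous). For the upper bound, an M-move is in particular a move from a $B$-even position, so Lemma~\ref{GevenGeven} forces $x'$ to be $B$-odd, i.e.\ $\B(x')$ is odd; together with $\B(x')\le \B(x)=b$ from Claim~\ref{Bnonincrease}, the parity then gives $\B(x')\le b-1$. What remains is the matching lower bound $\B(x')\ge b-1$, which I will establish by exhibiting a basic position $z'$ with $z'\preceq x'$ and $b(z')=b-1$.

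First I fix an ordered basic $z$ supporting $x$; since $b$ is even, all $z_i$ are even by the parity condition~\eqref{m-parity}. Because $|z|=kb<(k+1)b$ and $z_i\le b$, the coordinates cannot all equal $b$, so $z_1<b$, hence $z_1\le b-2$. The witness $z'$ will always have the form $z-\mv{j}$: I keep one even coordinate $z_j$ and lower every other coordinate by $1$. This drops $|z|$ by exactly $k$, so $|z'|=k(b-1)$; it turns the $k$ lowered (formerly even) coordinates odd and leaves $z_j$ as the unique even coordinate, which is exactly the parity condition~\eqref{m-parity} for the odd value $b-1$; the bound condition~\eqref{m-bound} holds automatically for the lowered coordinates and for the kept one reduces to $z_j\le b-1$, i.e.\ $z_j<b$. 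I will take $j=e(x)$ when $z_{e(x)}<b$, and $j=1$ when $z_{e(x)}=b$ (in which case $z_1<b=z_{e(x)}$ forces $e(x)\ge2$); either way $z_j<b$, so $z'$ is basic with $b(z')=b-1$. The degenerate case $z_1=0$ is handled by Corollary~\ref{z1-fact}, which gives $z=(0,b,\dots,b)$: then either $e(x)=1$, falling under the first choice, or $e(x)\ge2$, falling under the second; in both I keep the $0$, so no coordinate ever becomes negative.

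The step that needs real care, and which I expect to be the main obstacle, is verifying $z'\preceq x'$, where $x'_{e(x)}=x_{e(x)}$ and $x'_i=x_i-1$ for $i\ne e(x)$. Since coordinatewise domination in a common indexing implies domination of the sorted vectors, it suffices to check $z'_i\le x'_i$ index by index. For the lowered coordinates this is immediate from $z_i\le x_i$, and for the kept coordinate under the first choice ($j=e(x)$) it is just $z_{e(x)}\le x_{e(x)}$. The delicate sub-case is the second choice $j=1$ with $e(x)\ge2$, where I need the \emph{strict} inequality $z_1<x_1$ because $x'_1=x_1-1$. I resolve this by a parity/ordering argument: if $x_1$ is odd then $z_1<x_1$ since $z_1$ is even; if $x_1$ is even then, as $x_{e(x)}$ is the smallest even coordinate and $x_1\le x_{e(x)}$, one has $x_1=x_{e(x)}\ge z_{e(x)}=b>z_1$, so again $z_1<x_1$. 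Once $z'\preceq x'$ is established, $z'$ witnesses $\B(x')\ge b(z')=b-1$, and combining with the upper bound yields $\B(x')=b-1=\B(x)-1$.
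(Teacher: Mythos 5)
Your proof is correct, and its skeleton coincides with the paper's: the upper bound $\B(x')\le \B(x)-1$ comes from Lemma~\ref{GevenGeven} plus Claim~\ref{Bnonincrease}, and the lower bound from a witness of the form $z-\mv{j}$ built from an ordered basic $z$ supporting $x$, with the same case split on whether $z_{e(x)}<b(z)$ or $z_{e(x)}=b(z)$. The one place where you diverge is the violated-bound case $z_{e(x)}=b(z)$: the paper keeps the coordinate $i-1$ sitting just before the plateau $z_i=\dots=z_{k+1}=b(z)$ and derives the needed strict inequality $z_{i-1}<x_{i-1}$ from the assertion that $x_j$ is odd for all $j<i$, whereas you keep coordinate $1$ and prove $z_1<x_1$ by a dichotomy: parity when $x_1$ is odd, and the ordering argument $x_1=x_{e(x)}\ge z_{e(x)}=b(z)>z_1$ when $x_1$ is even. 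This difference is not cosmetic, because the paper's parity assertion can actually fail: in NIM$(3,2)$ take $x=(2,2,2)$, whose unique ordered supporting position is $z=(0,2,2)$, so that $i=2$ and $e(x)=3$, yet $x_{i-1}=x_1=2$ is even (the paper's conclusion survives there only because $z_1=0<b(z)\le x_1$, which is precisely the second prong of your dichotomy). So at the single delicate point of the argument your write-up is more careful than the paper's own, while everything else --- the bound $z_1\le b(z)-2$, the $z_1=0$ degeneracy via Corollary~\ref{z1-fact}, vacuity when $\B(x)=0$ via Claim~\ref{terminal}, and reducing sorted domination to coordinatewise domination --- matches the paper's reasoning.
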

\begin{proof}
 $\B(x)>0$ due to Claim~\ref{terminal}. By Lemma~\ref{GevenGeven}, $x'$ is $B$-odd. Thus, due to Claim~\ref{Bnonincrease}, it is enough to prove  $\B(x')\geq \B(x)-1$. 

  Let $z$ be an ordered basic position supporting $x$. Take $z'=z-\mv{e(x)}$. It is clear that $z'\preceq x'$ and $b(z')= b(z)-1$. The parity condition~\eqref{m-parity} holds for~$z'$: since  all coordinates are even in~$z$, exactly one coordinate is even in~$z'$.
If the bound condition~\eqref{m-bound} also  holds, we have $z'\preceq x'$, $z'$ is basic. Thus $\B(x')\geq b(z') =b(z)-1 =  \B(x)-1$.
  
If the bound condition~\eqref{m-bound} is violated, then $z_{e(x)} =b(z)$. Choose the minimal $i$ such that $z_i=z_{i+1}=\dots=z_{k+1}=b(z)$. Note that $i>1$, otherwise $|z|=(k+1)b(z)\ne kb(z)$ and~\eqref{m-def} is violated. Since $e(x)\geq i$, $x_j$ is odd for $j<i$. In particular,  $x_{i-1}$ is odd and $z_{i-1}$ is even. Thus $z_{i-1}< x_i$. Therefore  $z'' = z- \mv{i-1} $ satisfies $z''\preceq x'$ and all the conditions~(\ref{m-def}--\ref{m-parity}). 

  Since $b(z'') = b(z)-1$, we get  $\B(x')\geq b(z'') =b(z)-1= \B(z)-1 = \B(x)-1$.
\end{proof}

\begin{claim}\label{M-moveFromCritical}
  An M-move from a basic position $z$ leads to a basic position $z'$ such that  $\B(z')=\B(z)-1$.
\end{claim}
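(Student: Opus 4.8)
The plan is to split on the parity of $b=b(z)$, use the parity condition~\eqref{m-parity} to pin down exactly which coordinate the M-move preserves, and then verify directly that $z' = z - \mv{e(z)}$ satisfies the three defining conditions~\eqref{m-def}, \eqref{m-bound}, \eqref{m-parity} of a basic position with $b(z') = b-1$. Once $z'$ is shown to be basic, the observation that $\B(\cdot)=b(\cdot)$ on basic positions yields $\B(z') = b(z') = b-1 = \B(z)-1$ at once. The mass condition is the same one-line computation in both cases, since any M-move decreases exactly $k$ coordinates by one, so $|z'| = |z| - k = kb - k = k(b-1)$.

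First I would treat the case of odd $b$. Here~\eqref{m-parity} forces exactly one coordinate $z_p$ to be even, so the smallest even coordinate is $z_p$ itself and $e(z)=p$; the M-move keeps $z_p$ and reduces every (odd) coordinate $z_j$, $j\ne p$, by one. Then $z'_p = z_p$ and $z'_j = z_j - 1$ are all even, which is the parity condition for the even value $b-1$; nonnegativity is immediate because each reduced $z_j$ was odd, hence $\geq 1$; and the bound condition is free, since $z_p$ even and $b$ odd give $z_p \leq b-1$, while $z'_j = z_j - 1 \leq b-1$ for the remaining coordinates.

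Next the case of even $b$, where~\eqref{m-parity} makes all $z_i$ even and $e(z)$ is the largest index attaining the minimum $z_1$. The M-move preserves $z_{e(z)} = z_1$ and decreases the rest, so $z'_{e(z)}$ is the unique even coordinate of $z'$, matching the parity condition for the odd value $b-1$. The two points needing care are the bound condition on the preserved coordinate and nonnegativity. For the bound I would argue $z_1 < b$: were $z_1 = b$, all coordinates would equal $b$ and $|z| = (k+1)b \ne kb$, a contradiction; hence $z'_{e(z)} = z_1 \leq b-2 \leq b-1$. For nonnegativity I would invoke Corollary~\ref{z1-fact}: if $z_1 = 0$ then $z = (0,b,\dots,b)$, forcing $e(z)=1$ so every reduced coordinate equals $b\geq 1$; and if $z_1 > 0$ then $z_1 \geq 2$, so every reduced coordinate is $\geq 1$.

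The main obstacle, and the only place the argument is not purely mechanical, is this even case: one must exclude the degenerate situations where the preserved coordinate sits at the bound $b$ (ruled out by the sum constraint) or where a reduced coordinate would go negative (ruled out by Corollary~\ref{z1-fact}). These are precisely the edge cases that Claim~\ref{FromGeven} had to circumvent by passing to an auxiliary shifted witness $z''$; but because here the move starts from the basic position $z$ itself, so that $z_{e(z)} = z_1$ rather than the minimal even coordinate of some larger supported $x$, no such workaround is needed and $z'$ is directly basic.
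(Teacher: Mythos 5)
Your proof is correct and takes essentially the same route as the paper's: you verify the three defining conditions \eqref{m-def}, \eqref{m-bound}, \eqref{m-parity} directly for $z' = z - \mv{e(z)}$, splitting on the parity of $b(z)$, with the key point $z_{e(z)} < b(z)$ obtained by the sum argument when $b(z)$ is even and by parity when $b(z)$ is odd. The only (harmless) addition is your explicit check of nonnegativity, i.e.\ legality of the M-move, which the paper leaves implicit.
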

\begin{proof}
The sum of coordinates decreases by $k$ at a move. Thus~\eqref{m-def} holds for $z'$ and $b(z') = b(z)-1$. 
  
  Let us prove that $z_{e(z)}<b(z)$, it guarantees that~\eqref{m-bound} holds for $z'$. If $b(z)$ is even, then  all coordinates of $z$ are even and the smallest coordinate is strictly less than $b(z)$ \big(otherwise,  $|z| = (k+1)b(z)> kb(z)$\big).
  If $b(z)$ is odd, then exactly one coordinate is even and it is less than $b(z)$.

  The parity condition is verified in a similar way. 
  If $b(z)$ is even then  there is exactly one even coordinate in $z'$.
  If $b(z)$ is odd then the M-move  $z\to z'$ keeps the even coordinate and decreases the rest of coordinates by 1. So, all coordinates in $z'$  are even.
  
  Thus $z'$ is basic,  and $\B(z')=b(z')  =b(z)-1= \B(z)-1$.
\end{proof}

\begin{claim}\label{FromGodd}
   If $x\to x'$ is an M-move in \textup{NIM}$(k+1,k)$ from a $B$-odd non-exceptional position~$x$. Then $\B(x') = \B(x)-1$. 
\end{claim}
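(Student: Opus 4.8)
The plan is to prove the two inequalities $\B(x')\ge b-1$ and $\B(x')\le b-1$ separately, where $b=\B(x)$ is odd since $x$ is $B$-odd. The bound $\B(x')\le b$ is immediate from Claim~\ref{Bnonincrease}, so for the upper estimate it suffices to exclude the value $\B(x')=b$; combined with the lower bound this pins $\B(x')$ to the unique even value $b-1$, i.e.\ $x'$ is $B$-even. I expect the lower bound to hold for \emph{every} $B$-odd position, and non-exceptionality to be needed only to rule out $\B(x')=b$.

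For the lower bound I would start from an ordered basic $z$ supporting $x$ (so $z\preceq x$ and $b(z)=b$) with unique even coordinate $z_p$. By Claim~\ref{M-moveFromCritical} its M-move image $z-\mv{p}$ is basic with value $b-1$ and all coordinates even, so it suffices to place it below $x'$. Comparing sorted vectors through their level sets, $z-\mv{p}\preceq x'$ is equivalent to the threshold inequalities
\[
[z_p=t]+\#\{i:z_i\ge t+1\}\ \le\ [x_{e(x)}=t]+\#\{i:x_i\ge t+1\}\qquad(\text{for all }t),
\]
which, since $z\preceq x$ already gives $\#\{z_i\ge t+1\}\le\#\{x_i\ge t+1\}$, can fail only at $t=z_p$, when there is no slack in that count and $x_{e(x)}\ne z_p$. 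First I would observe that absence of slack at $t=z_p$ forces a coordinate of $x$ equal to the \emph{even} number $z_p$ (by monotonicity of level sets under $\preceq$); hence if $x$ is all odd this bad case is vacuous and $z-\mv{p}\preceq x'$ holds outright. If $x$ has an even coordinate, I would instead re-choose the supporting basic position — e.g.\ the lexicographically maximal sorted basic $z\preceq x$ with $b(z)=b$ — and argue that its even coordinate then coincides with the smallest even coordinate $x_{e(x)}$ of $x$. For that choice $z-\mv{e(x)}$ is basic (the kept coordinate stays even, all others become even), and being a subtraction at the \emph{same} index $e(x)$ as $x\to x'=x-\mv{e(x)}$, it satisfies $z-\mv{e(x)}\preceq x-\mv{e(x)}=x'$ automatically. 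Either way $\B(x')\ge b-1$.

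For the upper bound I argue by contradiction: assume $\B(x')=b$ and let $w'$ be a basic position supporting $x'$, with unique even coordinate $w'_q$. Applying the \emph{inverse} M-move to $w'$ (keep $w'_q$, raise the $k$ odd coordinates by one) yields an all-even $v$ with $b(v)=b+1$ and every coordinate $\le b+1$. If $v\preceq x$ then $\B(x)\ge b+1$, contradicting $\B(x)=b$; so everything reduces to aligning $w'_q$ under the coordinate $e(x)$ that $x\to x'$ keeps, for then $v\preceq (x'+\mv{e(x)})=x$ since the increment is at the same index. I would show that for non-exceptional $x$ such an aligned supporter $w'$ can always be selected, and that the \emph{only} configuration obstructing this alignment is exactly condition~(B) of Theorem~\ref{th:critical}: all coordinates of $x$ odd, $|x|\equiv-1\pmod k$ with $m:=(|x|-k+1)/k$ even, and $\max x<m$, i.e.\ $x$ exceptional. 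Since $x$ is assumed non-exceptional, the alignment succeeds, $v\preceq x$, and we reach the contradiction $\B(x)\ge b+1$. Hence $\B(x')\ne b$, and with the lower bound $\B(x')=b-1$.

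The hard part will be the two alignment arguments, and especially proving that failure of alignment in the upper bound is \emph{equivalent} to exceptionality rather than merely implied by it; this is where the exact arithmetic of condition~(B) — the congruence $|x|\equiv-1\pmod k$, the parity of $m$, and $\max x<m$ — must be matched against the structure of $B$-odd positions, using Claims~\ref{exceptional0}--\ref{exceptional3}. The lexicographic (or analogous extremal) selection of the supporting basic position, together with Corollary~\ref{z1-fact} and Claim~\ref{12bound}, is the technical device I expect to carry both alignments; the remaining parity and bound checks are routine, of the kind already appearing in Lemma~\ref{GevenGeven} and Claim~\ref{FromGeven}.
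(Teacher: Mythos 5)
Your skeleton matches the paper's (two inequalities, Claim~\ref{Bnonincrease} for the upper cap, non-exceptionality used only to exclude $\B(x')=\B(x)$, and witnesses obtained by moving supporters down from $x$ or lifting supporters of $x'$ back up), and your level-set reformulation of $\preceq$ and the all-odd observation are correct. But both places where you invoke an ``alignment'' lemma are gaps, and they are exactly where the whole difficulty of the claim lives. For the lower bound, the lemma you propose is false as stated: the (reverse-)lexicographically maximal sorted basic supporter need not have its even coordinate at index $e(x)$, and a sorted aligned supporter may not exist at all. In NIM$(3,2)$ take $x=(1,1,8)$: then $\B(x)=1$, the unique sorted supporter is $(0,1,1)$ with even coordinate at index $1$, while $e(x)=3$; or take $x=(2,4,5)$, where $e(x)=1$ but the $\lex$-maximal supporter is $(1,4,5)$ with even coordinate at index $2$, and indeed $(1,4,5)-\mv{2}=(0,4,4)\not\preceq(2,3,4)=x'$. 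What actually closes your ``bad case'' is a local parity swap on an arbitrary supporter $z$ (replace $z_{e(x)}$ by $z_{e(x)}+1$ and $z_p$ by $z_p-1$), together with a non-obvious verification that $z_{e(x)}<b$ so the bound condition survives; this is precisely the paper's $z''$ construction and the accompanying $z_{e(x)}=b(z)$ contradiction argument, neither of which appears in your proposal.

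For the upper bound the situation is worse: the statement ``for non-exceptional $x$ an aligned supporter $w'$ of $x'$ can always be selected, and the only obstruction is exactly condition (B)'' is not a lemma you prove, it is a promise, and it is the heart of the claim. In the paper, the troublesome configuration is pinned down concretely ($x$ all odd, $e(x)=k+1$, and a supporter $z'$ of $x'$ with $z'_{k+1}=x_{k+1}$); then, assuming $\B(x')=\B(x)$, one needs the dichotomy ``either $x_p-z'_p>1$, or $x_p-z'_p=1$ and some $j\notin\{p,k+1\}$ has $z'_j<x_j-2$'' --- whose failure is exactly exceptionality --- followed by two different explicit constructions of an all-even basic $z^*\preceq x$ with $b(z^*)=b+1$. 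Your proposal contains none of this arithmetic; note also that your framing (existence of an aligned supporter of $x'$) is logically delicate, since under the reductio hypothesis $\B(x')=\B(x)$ any statement is vacuously ``true,'' so it must be established by a direct construction from an arbitrary unaligned supporter, which is again the paper's case analysis. As it stands, the proposal is a correct architectural outline resting on one false lemma and one missing proof.
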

\begin{proof}
  The equality follows from two inequalities $\B(x') \geq \B(x)-1$ and $\B(x') \leq \B(x)-1$.

\smallskip
  
  To prove $\B(x') \geq \B(x)-1$, take an ordered basic position $z$ that supports $x$ and make an M-move $p = e(z)$ from $z$ to $z'$. Note that $z'$ is basic due to Claim~\ref{M-moveFromCritical} and $z_p=z'_{p}\leq b(z') = b(z) -1$, since $z_p$ is even and $b(z)$ is odd.

  If   $x_p>z_p$ then $x'_p\geq z_p$ and $z'\preceq x'$. Thus $\B(x')\geq b(z') = b(z)-1 = \B(x)-1$ and we are done.

If  $e(x)=p$ then, again,  $z'\preceq x'$ and we are done.
  
Assume now that $x_p=z_p$ (so there are even coordinates in $x$) and $e(x)\ne p$. Therefore $z_{e(x)}< x_{e(x)}$, since the former is odd (the even coordinate in $z$ is $p$) and the latter is even.

Note also that $z_p=x_p>0$. Otherwise, $z_p=x_p=0$ and, since $z$ is ordered,   $p=1$ and  $x_p=z_p$ is the smallest even coordinate in $x$ which implies $e(x) = p =1$, a~contradiction to the above assumptions.

Now  define $z''$ by the rules
\[
z''_j = \left\{
\begin{aligned}
  &z'_j, &&\text{if}\ j\ne e(x), \ j\ne p,\\
  &z_{e(x)}+1, &&\text{if}\ j= e(x),\\
  &z_p-2, &&\text{if}\ j= p.
\end{aligned}\right.
\]
We are going to prove that $z''$ is basic and $b(z'') = b(z)-1$.
All coordinates in $z''$ are even. Note that $|z'|=|z''|= kb(z)-k $, since \[(z_{e(x)}-1) + z_p = (z_{e(x)}+1)+(z_p-2).\] In the case under consideration, the bound condition~\eqref{m-bound} for $z''$ can be violated in the coordinate $e(x)$ only. Suppose, for the sake of contradiction, that $z_{e(x)}+1> b(z)-1$. Both $z_{e(x)}$ and $b(z)$ are odd and $z_{e(x)}\leq b(z)$. Therefore $z_{e(x)}=b(z)$.
Recall that  $z$ is ordered and $z_{p}$ is even while $b(z)$ is odd. Thus $z_{p}< b(z)$ which implies $p< e(x)$. 
Since $x_p=z_p$ is even, from the definition of $e(x)$ we get
\[
b(z) = z_{e(x)} \geq z_p = x_p = x_{p+1} = \dots = x_{e(x)}\geq z_{e(x)}.
\]
Thus $z_{e(x)}= z_p$, a contradiction with parities of $z_{e(x)}$ and $z_p$ (the former is odd and the latter is even).

We conclude that  $z''$ is basic,   $b(z'') = b(z)-1$, and $z''\leq x'$. Therefore $\B(x')\geq b(z'') = b(z)-1 = \B(x)-1$.

  \smallskip

  To  prove that  $\B(x') \leq \B(x)-1$, take a basic position $z'$ that supports $x'$.
  
If $b(z')$ is even, then Claim~\ref{ToGeven} implies $\B(x')<\B(x)$ and we are done.

Assume now that $b(z')$ is odd. Let $p$ be the even coordinate in $z'$.

If $p=e(x)$, $z=z'+\mv{e(x)}\preceq x$ and $z$ is basic, since all coordinates are even, and  $|z| = k(b(z')+1)$, and the bound condition~\eqref{m-bound} holds. Thus $\B(x)\geq b(z')+1 = \B(x')+1$.

If $e(x)\ne p$ and  $z'_{e(x)}< x'_{e(x)} = x_{e(x)}$, then  all coordinates are even in $z'' = z'+\mv{p}$, and $|z''| = k(b(z')+1)$, and the bound condition~\eqref{m-bound} holds for $z''$,  and $z''\leq x$. Thus, we get $\B(x)\geq b(z')+1 = \B(x')+1$.

In the remaining case $e(x)\ne p$ and  $z'_{e(x)}= x'_{e(x)} = x_{e(x)}$. It implies  that $x_{e(x)}$ is odd, thus all $x_i$ are odd by definition of M-moves and $e(x)=k+1$.
We have proved already that $\B(x')\geq \B(x)-1$ and $\B(x')\leq \B(x)$ (Claim~\ref{Bnonincrease}). So, to complete the proof, we need to exclude the case $\B(x')=\B(x)$ (provided $x$ is non-exceptional).

Suppose, for the sake of contradiction, that $\B(x')=\B(x)$.   We are going to construct a basic position $z^*$ such that $z^*\preceq x$ and $b(z^*) = b(z')+1$. It would imply that $\B(x)\geq b(z^*)> \B(x')$, a~contradiction.

Note that $z'_i\leq x'_i = x_i-1$ for all $1\leq i\leq k$. It implies that $z'_i< x'_i=x_i-1$ for $i\ne p$ (the former is odd, the latter is even).

If $z'_i\geq x'_i-1 = x_i-2$ for all $i\notin\{p, k+1\}$, then $z'_i = x_i-2$, since $x_i$ is odd. Thus
\[
|x| =  2(k-1) +(x_p-z'_p)+|z'| =(x_p-z'_p)+(k-2) +k(b(z')+1)
\]
(recall that now $z'_{e(x)} = z'_{k+1} = x_{k+1} = x_{e(x)}$). 
If $z'_p = x'_p = x_p-1$ then $x_p-z'_p=1$ and we come to a contradiction with the assumption that $x$ is not exceptional.

So we get a dichotomy: either $x_p-z'_p>1$ or $x_p-z'_p=1$ and there exists $j\notin\{p,k+1\}$ such that $z'_j< x_j-2$.  

In the first case $z^*$ is defined as follows
\[
z^*_i = \left\{
\begin{aligned}
  &z'_p +2 , &&\text{if}\ i=p,\\
  &z'_{k+1} -1 , &&\text{if}\ i=k+1,\\
  &z'_i+1, &&\text{otherwise.}
\end{aligned}\right.
\]
By definition, $z^*\preceq x$, and all $z^*_i$ are even, and
\[|z^*|= kb(z') + 2-1+(k-1) = k(b(z')+1),\] 
and the bound condition holds for $z^*$, since $z'_p< b(z')$ (the former is even, the latter is odd). We come to a contradiction $\B(x)\geq b(z')+1 = \B(x')+1 = \B(x)+1$.

In the second case $z^*$ is defined as follows
\[
z^*_i = \left\{
\begin{aligned}
  &z'_j +3 , &&\text{if}\ i=j,\\
  &z'_{k+1} -1 , &&\text{if}\ i=k+1,\\
  &z'_{p}  , &&\text{if}\ i=p,\\
  &z'_i+1, &&\text{otherwise.}
\end{aligned}\right.
\]
Again,  $z^*\preceq x$, and
all $z^*_i$ are even, and
\[|z^*| = kb(z') + 3-1+(k-2) = k(b(z')+1),\] and the bound condition holds for $z^*$. We come to a contradiction  $\B(x)\geq b(z')+1 = \B(x')+1 = \B(x)+1$.
\end{proof}

\begin{lemma}\label{M=G}
   $\M(x)=\B(x)$ for non-exceptional $x$ and $\M(x)=\B(x)+1$ for exceptional~$x$.
\end{lemma}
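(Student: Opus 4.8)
The plan is to prove the lemma by induction on $\M(x)$, following the chain of M-moves $x \to x' \to \dots$ down to the terminal position. The base case is $\M(x)=0$: by Claim~\ref{terminal} this is equivalent to $\B(x)=0$ and to $x$ being terminal, and a terminal position has $x_1=x_2=0$, hence is not odd and therefore non-exceptional; so $\M(x)=0=\B(x)$, as required.

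The key structural observation I would isolate first is that \emph{an M-move never produces an odd position}, and hence never produces an exceptional one (exceptional positions are odd by condition (B)). Indeed, if $x$ has an even coordinate, the M-move keeps a smallest even coordinate, so $x'_{e(x)}=x_{e(x)}$ is even; if all coordinates of $x$ are odd, the M-move keeps the largest one and reduces the remaining $k\geq 2$ coordinates, which then become even. In either case $x'$ has an even coordinate. Consequently, along the whole M-chain only the very first position can be exceptional, and the M-successor $x'$ is always non-exceptional, so the induction hypothesis applies to $x'$ without any case distinction: $\M(x')=\B(x')$.

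Writing $\M(x)=1+\M(x')=1+\B(x')$ for the M-successor $x'$, the inductive step then splits into three cases according to $x$. If $x$ is non-exceptional and $B$-even, Claim~\ref{FromGeven} gives $\B(x')=\B(x)-1$, so $\M(x)=\B(x)$. If $x$ is non-exceptional and $B$-odd, Claim~\ref{FromGodd} again gives $\B(x')=\B(x)-1$, so $\M(x)=\B(x)$. Finally, if $x$ is exceptional, then $x$ is $B$-odd (so Claim~\ref{FromGodd} is not available), and instead Claim~\ref{exceptional3}, applied to the M-move as a move out of $x$, gives $\B(x')=\B(x)$, whence $\M(x)=1+\B(x')=\B(x)+1$. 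This is exactly the claimed dichotomy.

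The substantive work has already been packaged into Claims~\ref{FromGeven}, \ref{FromGodd} and~\ref{exceptional3}, so the lemma itself is essentially a bookkeeping induction; the only point that genuinely requires care is the structural observation that an M-move cannot land on an odd position, since this is what lets the induction hypothesis apply to the successor unconditionally and cleanly confines the exceptional case to the root of the M-chain. I would also check at the outset that the M-move is always legal and that the chain terminates — each M-move removes exactly $k$ stones, and when $x_1=0$ the rule keeps that zero coordinate so that all reduced piles stay positive — in order to guarantee that $\M(x)$ is well defined before running the induction.
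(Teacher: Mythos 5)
Your proof is correct and takes essentially the same route as the paper's: the same base case via Claim~\ref{terminal}, the same key observation that an M-move always leaves an even coordinate and hence never reaches an exceptional position, and the same case split using Claims~\ref{FromGeven}, \ref{FromGodd} and~\ref{exceptional3}. The only difference is that you induct on $\M(x)$ rather than on $\B(x)$, which is equivalent bookkeeping (and arguably slightly cleaner, since $\B$ does not decrease on the move out of an exceptional position, whereas $\M$ always does).
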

\begin{proof}
  Induction on values of $\B(x)$. The base case is Claim~\ref{terminal}.
  Note that there are no M-moves to exceptional positions, since after an M-move at least one coordinate is even. Also, there are no moves between  exceptional positions. Thus the induction step for non-exceptional positions follows from Claims~\ref{FromGeven} and~\ref{FromGodd} depending on parity of $\B(x)$, while for exceptional positions it follows from Claim~\ref{exceptional3}.
\end{proof}

\begin{corollary}\label{M=m}
  $\M(x) = m(x)$ for all $x$.
\end{corollary}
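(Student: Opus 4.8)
The plan is to deduce the corollary from Lemma~\ref{M=G} together with a single structural fact: the function $\M$ is \emph{monotone} with respect to $\succeq$, i.e.\ $x\succeq y$ implies $\M(x)\geq\M(y)$. Once monotonicity is available, the equality $m(x)=\M(x)$ is almost immediate from the definition of $m$-critical positions, so the real work is concentrated in establishing monotonicity, and within that, in the one case where the $+1$ correction attached to exceptional positions could in principle destroy it.

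First I would record that $\B$ itself is monotone: since $x\succeq y\succeq z$ gives $x\succeq z$ by transitivity of $\succeq$, the set of basic positions dominated by $x$ contains the set of those dominated by $y$, and hence $\B(x)\geq\B(y)$ straight from the definition~\eqref{eq:G-def}. Next I would upgrade this to $\M$ using Lemma~\ref{M=G}, casing on which of $x,y$ are exceptional. If $y$ is non-exceptional, then $\M(y)=\B(y)\leq\B(x)\leq\M(x)$ and we are done regardless of $x$; likewise if $x$ is exceptional the extra $+1$ only helps. The only delicate case is $y$ exceptional with $x\succ y$ (the case $x=y$ being trivial): here $\M(y)=\B(y)+1$, so I must produce a \emph{strict} gap $\B(x)\geq\B(y)+2$. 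This is exactly what Claims~\ref{exceptional0} and~\ref{exceptional1} supply: Claim~\ref{exceptional1} gives $\B(x)>m(y)$, while Claim~\ref{exceptional0} gives $m(y)>\B(y)$, so that $\B(x)\geq m(y)+1\geq\B(y)+2=\M(y)+1$, whence $\M(x)\geq\B(x)>\M(y)$. Thus $\M(x)\geq\M(y)$ in every case and $\M$ is monotone.

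With monotonicity in hand I would finish as follows. To see $m(x)\geq\M(x)$, take a $\succeq$-minimal element $z$ of the nonempty set $\{y:\M(y)=\M(x),\ y\preceq x\}$; any $w\prec z$ with $\M(w)=\M(x)$ would satisfy $w\preceq x$ and contradict minimality, so $z$ is globally $\succeq$-minimal among positions of $\M$-value $\M(x)$, i.e.\ $z$ is $\M(x)$-critical and $z\preceq x$. To see $m(x)\leq\M(x)$, observe that any $(\M(x)+1)$-critical position $z$ has $\M(z)=\M(x)+1>\M(x)$, so by monotonicity $z\preceq x$ is impossible; hence $x$ dominates no $(\M(x)+1)$-critical position. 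By the uniqueness clause in the definition of $m(x)$, these two facts force $m(x)=\M(x)$.

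The main obstacle is precisely the monotonicity step in the exceptional case: the naive bound $\B(x)\geq\B(y)$ does not suffice to absorb the $+1$ that Lemma~\ref{M=G} attaches to an exceptional lower position, and one genuinely needs the \emph{strict} increase of $\B$ when passing from an exceptional $y$ to a strictly larger $x$, guaranteed by Claim~\ref{exceptional1}. Everything else is routine bookkeeping with the partial order $\succeq$ and the definitions.
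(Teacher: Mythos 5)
Your proof is correct, but it is organized around a different key fact than the paper's. The paper states Corollary~\ref{M=m} with no argument at all: its intended justification is bound up with the proof of Theorem~\ref{th:critical}, where the $m$-critical positions are \emph{identified} with the basic positions (condition (A)) and the exceptional positions (condition (B)), after which $m(x)$ can be read off from that description together with Lemma~\ref{M=G}. You bypass the characterization of critical positions altogether: your key lemma is monotonicity of $\M$ with respect to $\succeq$, deduced from Lemma~\ref{M=G} plus Claims~\ref{exceptional0} and~\ref{exceptional1}, and the corollary then follows from the bare definition of $m$-criticality via a minimal-element argument in the finite set $\{y:\M(y)=\M(x),\ y\preceq x\}$, together with the observation that monotonicity forbids $x$ from dominating any position of larger $\M$-value. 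You also isolate the one genuine danger correctly---the $+1$ that Lemma~\ref{M=G} attaches to exceptional positions---and absorb it with the strict gap $\B(x)\geq m(y)+1\geq \B(y)+2$; this is essentially the same content the paper extracts from Claim~\ref{exceptional1} when it remarks that an exceptional critical position ``supports only itself.'' As for what each route buys: the paper's is economical because Theorem~\ref{th:critical} is wanted anyway (it drives the polynomial-time algorithm), whereas yours is self-contained, needs no knowledge of what critical positions look like, and as a bonus makes the well-definedness of $m(\cdot)$ (asserted, not proved, right after its definition) transparent, since monotonicity of $\M$ is exactly what that assertion requires. Two harmless imprecisions, neither a gap: ``the case $x=y$'' should read $x^*=y^*$ (equality up to permutation of piles), which is still trivial because $\M$ is symmetric; and the existence of a $\succeq$-minimal element rests on finiteness of $\{y: y\preceq x\}$, which deserves a clause.
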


\begin{proof}[Proof of Theorem~\ref{th:critical}.]
From Claim~\ref{exceptional1} we conclude that an exceptional $m$-critical position (the condition (B)) supports only itself. For the rest of positions, the theorem follows from 
  Lemma~\ref{M=G},  since  $m$-critical non-exceptional positions and  basic positions supporting a non-exceptional $x$ are the same.
\end{proof}

\begin{proof}[Proof of Theorem~\ref{th:main}.]
  Due to Lemma~\ref{M=G}, Theorem~\ref{th:critical} and Corollary~\ref{M=m} it remains to prove that $\R(x) = \M(x)$ for all~$x$; in other words, $\R(x) = \B(x)$ for all non-exceptional~$x$ and $\R(x) = \B(x)+1$ for all exceptional~$x$.

  We prove it by induction on the sum of coordinates. In the proof we use an equivalent definition of $\R(x)$. Let $N^+(x)$ be the set of positions such that there exists a move $x\to x'$. 
Then  the remoteness function is defined by the following  recurrence
\begin{equation}\label{e-SM-acyclic-def}
\R(x) =\left\{
\begin{aligned}
&0,  &&\text{ if } \R(N^+(x))=\emptyset,\\
&1+ \min \R(N^+(x))\cap 2\ZZ_{\geq0}\,, && \text{ if } \R(N^+(x))\cap 2\ZZ_{\geq0} \neq \emptyset,\\
&1+ \max \R(N^+(x))\cap 2\ZZ_{\geq0}\,, && \text{ if } \emptyset \neq \R(N^+(x))\subseteq 1+2\ZZ_{\geq0}\,.
\end{aligned}\right. 
\end{equation}

The base case $x=0$ trivially holds.

Assume that $\R(x) = \M(x)$ for all~$x$ such that $\sum_i x_i< q$. Take $y$ such that $\sum_i y_i = q$.

If $y$ is exceptional, then, due to Claim~\ref{exceptional3} and Corollary~\ref{M=m}, for any move $y\to y'$, we have $m(y) = \M(y) = \B(y)+1 = \B(y')+1 = \R(y')+1$ (the last equality is the induction hypothesis) and $\R(y')$ is odd. Therefore $\R(y) = 1+\R(y') = \M(y)$.

If $y$ is $B$-odd and non-exceptional, then $y\to x= y-\mv{e(y)}$ is an M-move to $B$-even position. Due to Claim~\ref{FromGodd}, $\B(x) = \B(y)-1$. Also, $\B(x)\geq \B(y)-2$ for any other move from $y$ due to Claim~\ref{DeltaGodd}. From~\eqref{e-SM-acyclic-def} we conclude that $\R(y) = 1+ (\B(y)-1)= \B(y) $. 

If $y$ is $B$-even, it is  non-exceptional  and there are no moves to $B$-even positions from $y$ due to Lemma~\ref{GevenGeven}. It follows from Claim~\ref{FromGeven} that there is a move to $x$ such that $\B(x) = \B(y)-1$. 
From~\eqref{e-SM-acyclic-def} we conclude that $\R(y) = 1+ (\B(y)-1)= \B(y) $. 
\end{proof}

\section{Structural and algorithmic complexity}

It follows from Theorem~\ref{th:main} that $x$ is a P position in NIM$(k+1,k)$ iff $m(x)=\R(x)$ is even. 

For a fixed $k$, conditions~(A) and (B) in Theorem~\ref{th:critical}  are easily expressed in Presburger arithmetic (see, e.g.~\cite{Hasse18}): they include inequalities and congruences modulo fixed integers. Therefore the predicate $\R(x)=2y$ is also expressed in Presburger arithmetic. It implies that the set of P-positions is semilinear~\cite{Hasse18}, i.e. can be expressed as a~finite union of solutions of systems of linear inequalities combined with
equations modulo some integer (fixed for the set). It was conjectured
in~\cite{GV21} that the set of P-positions of any multidimensional subtraction game with nonnegative vectors of differences is semilinear. The conjecture
was supported by several sporadic examples from~\cite{GHHC20,CGKPV21}. Games NIM$(k+1,k)$ provide an  infinite family of nontrivial subtraction games having this property. 

Thus, for a fixed $k$, there exists a very simple linear time algorithm recognizing  P-positions in  NIM$(k+1,k)$.
Theorem~\ref{th:complexity} gives more. It claims that there exists an
algorithm recognizing P-positions even if $k$ is a part of the input.

It follows from Theorems~\ref{th:critical} and~\ref{th:main} that,
to compute $\R(x)$ efficiently,  it is enough to compute  $\B(x)$ efficiently for non-exceptional positions and compute $m(x)$ for exceptional ones.
Verifying the conditions (B) in Theorem~\ref{th:critical} can be done in polynomial time as well as computing $m(x)$ for exceptional positions.

To compute  $\B(x)$ efficiently for non-exceptional positions,  we use an enhanced version of  Claim~\ref{ordered}.
We need the reverse lexicographical order on $(k+1)$-dimensional integer vectors, which is defined as follows: $x\lex y$ if $x_i=y_i$ for all $i>t$ and $x_t< y_t$.

For even $b$, let $\ell(x,b)$ be the maximal (w.r.t. the reverse lexicographical order)  basic position $z$ such that $z\preceq x$, $b(z) = b$.

\begin{lemma}\label{maxlex}
  For all $x$ and for all even $b$, there exists $1<t\leq k+2$ such that $\ell(x,b)_i = b$ for $i\geq t$ and
  $0\leq x_i-\ell(x,b)_i\leq 1$ for all $1<i<t$.
\end{lemma}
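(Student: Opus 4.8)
The plan is to pin down the shape of the reverse-lex maximum directly by a local exchange argument, after first reducing to the ordered case. Since the predicate ``$z$ is basic with $b(z)=b$'' and the relation $z\preceq x$ are both invariant under permutations of the coordinates of $z$ (the former as noted in the proof of Claim~\ref{ordered}, the latter because $\preceq$ compares sorted vectors), and since among all permutations of a fixed multiset the non-decreasing arrangement is the largest in the reverse lexicographical order, the maximizer $z:=\ell(x,b)$ must itself be non-decreasing. Hence I may assume $z$ is ordered, in which case $z\preceq x$ is simply $z_i\le x_i$ for all $i$, and (as $b$ is even) every $z_i$ is even with $z_i\le b$. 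I then define $t$ to be the least index with $z_t=b$, setting $t=k+2$ if no coordinate equals $b$. Because $z$ is non-decreasing and bounded by $b$, this immediately gives $z_i=b$ for all $i\ge t$ and $z_i<b$ for all $i<t$, which is the first assertion of the lemma; it remains only to prove $x_i-z_i\le1$ for $1<i<t$.

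For the second assertion I argue by contradiction, assuming some middle coordinate is deficient, and I choose $i$ to be the \emph{largest} index in $\{2,\dots,t-1\}$ with $x_i-z_i\ge2$. The key point of the choice is that for every $j$ with $i<j<t$ we then have $z_j\ge x_j-1\ge x_i-1\ge z_i+1$, and since both $z_j$ and $z_i$ are even this forces $z_j\ge z_i+2$; together with $z_j=b\ge z_i+2$ for $j\ge t$, this shows $z_{i+1}\ge z_i+2$. I then perform the exchange $z_i\mapsto z_i+2$, $z_1\mapsto z_1-2$. Provided $z_1\ge2$, the resulting vector $z'$ is again even, has the same sum $kb$, still satisfies the bound condition (since $z_i+2\le b$ because $z_i<b$ is even and $b$ is even), and remains non-decreasing (at the bottom because $z_1-2\le z_1\le z_2$, and at position $i$ because $z_i+2\le z_{i+1}$); moreover $z'_i=z_i+2\le x_i$ and $z'_1=z_1-2\le x_1$, so $z'\preceq x$. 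Thus $z'$ is a competing basic position with $b(z')=b$, yet $z'_j=z_j$ for $j>i$ and $z'_i>z_i$, so $z\lex z'$, contradicting the maximality of $z=\ell(x,b)$. This contradiction yields $x_i-z_i\le1$ for all $1<i<t$.

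It remains to dispose of the case $z_1<2$, i.e.\ $z_1=0$ (recall that $z_1$ is even). Here Corollary~\ref{z1-fact} gives $z=(0,b,\dots,b)$, so $t=2$, the range $\{2,\dots,t-1\}$ is empty, and the lemma holds vacuously; the degenerate value $b=0$ is handled the same way by taking $t=2$. I expect the main obstacle to be exactly the verification that the exchanged vector stays a legitimate competitor --- in particular that it remains ordered and dominated by $x$ --- and this is precisely what the ``largest deficient index'' choice is designed to guarantee, since it is what makes $z_i+2\le z_{i+1}$ hold and thereby avoids any re-sorting. Everything else (parity, the sum, and the bound $z_i+2\le b$) is routine bookkeeping with the defining conditions~\eqref{m-def}--\eqref{m-parity}.
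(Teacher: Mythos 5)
Your proof is correct and is essentially the paper's own argument: the paper defines the same operation (transferring $2$ from a lower-indexed to a higher-indexed coordinate, called a \emph{shift}), observes that the reverse-lex maximizer must be ordered, and deduces from shift-maximality that $z_i \geq \min(x_i-1,\,b-1)$ for all $1<i\leq k+1$, whence by parity either $z_i\geq x_i-1$ or $z_i=b$, exactly your dichotomy. If anything, your write-up is more careful than the paper's: you explicitly dispose of the boundary cases ($z_1=0$ via Corollary~\ref{z1-fact}, and $b=0$) and verify that the exchanged vector stays ordered and dominated by $x$, points the paper's proof leaves implicit.
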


\begin{proof}
  If $i<j$ then decreasing the coordinate $i$ by 2 and increasing the coordinate $j$ by 2 transforms a basic position $z$ to a basic position $z'$ such that $z\lex z'$. This operation is  called  a \emph{shift}.

Note also that re-ordering a vector in the non-increasing order gives a vector that is not smaller than initial one with respect to the reverse lexicographical order.
  
Take now  an ordered basic position  $z$ such that $z\preceq x$, $b(z) = b$ and no shift can be applied to $z$. It means that $z_i\geq \min(x_i-1, b-1)$ for all $1<i\leq k+1$. Since  $b$ is even, then either $z_i\geq x_i-1$ or $z_i=b$.
\end{proof}

Note that $\ell(x,b)$ is uniquely determined by  $t$: there is exactly one even integer among $x_i-1$, $x_i$. It is a key observation for the proof of Theorem~\ref{th:complexity}.

\begin{proof}[Proof of Theorem~\ref{th:complexity}.]
  For exceptional positions $m(x)$ can be found in polynomial time. For the rest of the proof  we assume that $x$  is non-exceptional and our goal is to compute $\B(x)$.
  
  Let $b_{t}(x)$, $1<t\leq k+2$, be the maximal value $b(z)$ for $B$-even basic positions $z$ such that the conditions of Lemma~\ref{maxlex} hold for the specific value of~$t$.
  If there are no $z$ satisfying the requirements, then $b_{t} = -\infty$.
Let $E(x) = \max_{t} b_{t}(x)$.

Due to Lemma~\ref{maxlex},  $E(x)\leq \B(x)$  and equality holds exactly for $B$-even $x$. 
Let $x\to x' = x-\mv{e(x)} $ be an M-move.  If $x$ is $B$-odd  then  $x'$ is $B$-even and $\B(x) = \B(x')+1 = E(x')+1$. Therefore $E(x')=\B(x') = \B(x)-1> E(x)-1$. Similarly, for a $B$-even position $x$ we have $E(x)=\B(x) = \B(x')+1> E(x')+1 $.

It gives the rule to compute $\B(x)$ by $E(x)$:
\[
\B(x) =\left\{\begin{aligned}
  &E(x), &&\text{if}\ E(x)> E(x')+1,\\
  &E(x')+1, &&\text{if}\ E(x)< E(x')+1,
\end{aligned}\right.
\]
where $x\to x'$ is an M-move.
  
To complete the proof, we provide an efficient algorithm that computes $E(x)$. 
For brevity we use the following notation: 
\[
A_t = \sum_{i=2}^{t-1}2\cdot\lfloor x_i/2\rfloor .
\]
To find $b_{t}(x)$, one need to find the maximal $q$ satisfying the requirements
\begin{align}
  &2s + A_{t} + q\cdot 2(k+2-t) = q\cdot 2k,\label{=cond}\\
  &2s\leq x_1,\notag\\
  &2q\leq x_{t},\notag 
\end{align}
and set $b_{t}(x) =2q $. (Here $2s$ is the value of $z_1$.)

Eq.~\eqref{=cond} is equivalent to
\[
q\cdot 2(t-2) = 2s+ A_t\,.
\]
Thus, we have two linear inequalities on  $q$ and the maximal value of $q$ is
\[
\min\left(
\left\lfloor
\frac{2\lfloor x_1/2\rfloor+A_t}{2(t-2)}
\right\rfloor,
\lfloor \frac{x_{t}}2\rfloor
\right)\,.
\]
If $t=2$ the first inequality becomes irrelevant.

Combining all the arguments above, we get the polynomial algorithm to compute $\B(x)$ and $\R(x)$.
\end{proof}

\section*{Acknowledgements:} 
This research was prepared within the framework 
of the HSE University Basic Research Program.  
The fourth  author was supported in part 
by the state assignment topic no.~0063-2016-0003.

%%\newpage
\appendix 

\section*{Appendix: P- and N-positions  of  NIM(4,3)}

Here we use notation $[A]$ for the indicator function: $[A]=1$ if $A$
is true and $[A]=0$ otherwise; and  $a \bmod b$ for the residue of $a$
modulo $b$.

\def\labelitemii{$\mathsurround=0pt \circ$}

\medskip

\textbf{Case 0. \boldmath $x_1 + x_2 + x_3 + x_4 = 0 \pmod3$}

\begin{itemize}
    \item If $x_1$ and $x_2$ are both odd then $x$ is an N-position.
    \item Otherwise, if $(x_3 - x_2 - x_1) \geq 0$ then 
    $x$ is a P-position if and only if $(x_1 + x_2)$ is even.
    \item Otherwise, if $(x_3 - x_2 - x_1) = -1$ then 
    $x$ is a P-position if and only if $(x_1 + x_2)$ is odd.
    \item Otherwise, if $(x_1 + x_3 - x_2)$ is even 
    then $x$ is a P-position if and only if $(x_1 + x_2)$ is even.
    \item Otherwise, $x$ is a P-position if and only if 
    $(x_2 + p + q)$ is even, where
\[
\begin{aligned}
  &p = \left[(x_1 + x_3 - x_2) \equiv 3 \pmod 4 \right],\\
  &q = \left[(x_1 + x_2 + x_3 - 2x_4 - 3) = 12k,\  k \in \mathbb{Z}_{\geq 0}\right]. 
\end{aligned}
\]
\end{itemize}

\medskip

\textbf{Case 1. \boldmath $x_1 + x_2 + x_3 + x_4 = 1 \pmod 3$}

\begin{itemize}
    \item If $x_1$ and $x_2$ are both odd then $x$ is an N-position.
    \item Otherwise, if $(x_3 - x_2 - x_1) \geq 0$, or 
    $(x_3 - x_2 - x_1)$ is even, or $(x_3 - x_2 - x_1) = -3 $
    then $x$ is a P-position if and only if $(x_1 + x_2)$ is even.
    \item Otherwise, if $(x_3 - x_2 - x_1) = -1$ or $(x_3 - x_2 - x_1) = -5 $
    then $x$ is a P-position if and only if $(x_1 + x_2)$ is odd.
    \item Otherwise, $x$ is a P-position if and only if 
    $(x_2 + p + q)$ is odd, where
\[
\begin{aligned}
  & p = \left[(x_1 + x_3 - x_2) \equiv 1 \pmod 4 \right],\\
  & q = \left[(x_1 + x_2 + x_3 - 2x_4 - 7) = 12k,\ k \in \mathbb{Z}_{\geq 0} \right].
\end{aligned}
\]
\end{itemize}

\medskip

\textbf{Case 2. \boldmath $x_1 + x_2 + x_3 + x_4 = 2 \pmod 3$}

\begin{itemize}
    \item Let $x_1$ and $x_2$ be both odd:
    \begin{itemize}
        \item If $(x_3 - x_2 - x_1) \geq 0$ or 
        $(x_3 - x_2 - x_1) \in \{-1, -3, -4, -7\}$ 
        then $x$ is an N-position.
        \item Otherwise, $x$ is a P-position if and only if $p$ is
          odd, where
\[
p =
\left[
  \begin{aligned}
  &(x_1 + x_2 + x_3 - 2x_4 - c) = 12k, \ k \in \mathbb{Z}_{\geq 0},
    \\
    &c =2+ 3\big((x_1 + x_2) \bmod 4\big)
  \end{aligned}
  \right].
\]
    \end{itemize}
    \item Let $x_1$ be odd and $x_2$  even.
    \begin{itemize}
        \item If $(x_3 - x_2 - x_1) \geq 0$ or $(x_3 - x_2 - x_1) \in \{-2, -3, -6\}$ 
        then $x$ is an N-position.
        \item Otherwise, if $(x_3 - x_2 - x_1) = -1$ then $x$ is an P-position.
        \item Otherwise, $x$ is a P-position if and only if $(p + q)$
          is odd, where
          \[
          \begin{aligned}            
&p = \left[    (x_1 + x_3 - x_2) \equiv 1 \pmod 4  \right],\\
            &q = \left[
              \begin{aligned}
                &(x_1 + x_2 + x_3 - 2x_4 - c) = 12k, && k \in
                \mathbb{Z}_{\geq 0},\\
                & c = 5, &&\text{if  $(x_1 + x_2)$ is odd},\\
                & c = (8 - 3((x_1 + x_2) \bmod 4)), &&\text{otherwise.}
  \end{aligned}
              \right]            
          \end{aligned}
\]
    \end{itemize}
    \item Let $x_1$ be even.
    \begin{itemize}
        \item If $(x_3 - x_2 - x_1) \geq 0$ or $(x_3 - x_2 - x_1) \in \{-2, -3, -6\}$ 
        then $x$ is a P-position if and only if $x_2$ is even.
        \item Otherwise if $(x_3 - x_2 - x_1) = -1 $
        then $x$ is a P-position if and only if $x_2 $ is odd.
        \item Otherwise $x$ is a P-position if and only if $(p + q +
          x_2)$ is even, where
\[
\begin{aligned}
  &p = \left[(x_1 + x_3 - x_2) \equiv 3 \pmod 4\right],\\
  &q = \left[
    \begin{aligned}
     & (x_1 + x_2 + x_3 - 2x_4 - c) = 12k, && k \in \mathbb{Z}_{\geq
        0},\\
      & c = 5, && \text{if $(x_1 + x_2)$ is odd},\\
      & c = 2 + 3\big((x_1 + x_2) \bmod 4\big), &&\text{otherwise.}
    \end{aligned}
    \right]
\end{aligned}
\]
    \end{itemize}
\end{itemize}

\end{document}